\definecolor{darkblue}{rgb}{0.0, 0.0, 0.55}
\newcommand\N{\mathbb N}
\newcommand\Q{\mathbb Q}
\newcommand\R{\mathbb R}
\newcommand\C{\mathbb C}
\newcommand\x{\ushort X}
\newcommand\g{\ushort g}
\newcommand\h{\ushort h}
\newcommand\ii{\mathbbm i}
\newcommand\al\alpha
\newcommand\la\lambda
\newcommand\de\delta
\newcommand\ep\varepsilon
\newcommand\ph\varphi
\newcommand\si\sigma
\newcommand\ta\tau
\newcommand\dotcup{\mathbin{\dot\cup}}
\DeclareMathOperator\hess{Hess}
\DeclareMathOperator\conv{conv}
\DeclareMathOperator\sgn{sgn}
\DeclareMathOperator\id{I} 
\theoremstyle{definition}
\newtheorem{thm}{Theorem}[section]
\newtheorem{main}[thm]{Main Theorem}
\newtheorem{pro}[thm]{Proposition}
\newtheorem{rem}[thm]{Remark}
\newtheorem{ex}[thm]{Example}
\newtheorem{lem}[thm]{Lemma}
\newtheorem{df}[thm]{Definition}
\newtheorem{notation}[thm]{Notation}
\title[Lasserre relaxations for convex semialgebraic sets]
{On the exactness of Lasserre relaxations for\\compact convex basic closed semialgebraic sets}
\author[T.-L. Kriel]{Tom-Lukas Kriel}
\address{Fachbereich Mathematik und Statistik, Universität Konstanz, 78457 Konstanz, Germany}
\email{tom-lukas.kriel@uni-konstanz.de}
\author[M. Schweighofer]{Markus Schweighofer}
\address{Fachbereich Mathematik und Statistik, Universität Konstanz, 78457 Konstanz, Germany}
\email{markus.schweighofer@uni-konstanz.de}
\subjclass[2010]{Primary 14P10, 52A20; Secondary 13J30, 52A41, 90C22, 90C26}
\date{January 29, 2018}
\keywords{moment relaxation, Lasserre relaxation, basic closed semialgebraic set, sum of squares, polynomial optimization, semidefinite programming, linear matrix inequality, spectrahedron, semidefinitely representable set}
\begin{document}
\begin{abstract}
Consider a finite system of non-strict real polynomial inequalities
and suppose its solution set $S\subseteq\R^n$ is convex, has nonempty interior and
is compact. Suppose that the system satisfies the Archimedean condition,
which is slightly stronger than the compactness of $S$.
Suppose that each defining polynomial satisfies a 
second order strict quasiconcavity condition where it vanishes on
$S$ (which is very natural because of the convexity of $S$) or its
Hessian has a certain matrix sums of squares certificate
for negative-semidefiniteness on $S$ (fulfilled trivially by linear polynomials).
Then we show that the system possesses an exact Lasserre relaxation.

In their seminal work of 2009, Helton and Nie showed under the same conditions
that $S$ is the projection of a spectrahedron, i.e., it has a semidefinite representation.
The semidefinite representation used by Helton and Nie arises from glueing together
Lasserre relaxations of many small pieces obtained in a non-constructive way.
By refining and varying their approach, we show that we can simply take a Lasserre relaxation
of the original system itself. Such a result was provided by Helton and Nie with much more
machinery only under very technical conditions and after changing the description of $S$.
\end{abstract}

\maketitle


\section{Introduction}

\noindent
Throughout the article, $\N$ and $\N_0$ denote the set of positive and nonnegative integers,
respectively. We fix $n\in\N_0$ and denote by $\x:=(X_1,\dots,X_n)$ a tuple of $n$ variables.
We denote by $\R[\x]:=\R[X_1,\dots,X_n]$ the polynomial ring in these variables over $\R$.
For $\al\in\N_0^n$, we denote $|\al|:=\al_1+\ldots+\al_n$ and
$\x^\al:=X_1^{\al_1}\dotsm X_n^{\al_n}$. For $p=\sum_\al a_\al\x^\al\in\R[\x]$
with all $a_\al\in \R$, the degree of $p$ is defined as $\deg p:=\max\{|\al|\mid a_\al\ne0\}$ if $p\ne0$
and $\deg p:=-\infty$ if $p=0$. For each $d\in\R$, we consider the real vector space
\[\R[\x]_d:=\{p\in\R[\x]\mid\deg p\le d\}\]
of all polynomials of degree at most $d$. We admit here real numbers $d$ for technical reasons but note that $\R[\x]_d=\R[\x]_{\lfloor d\rfloor}$ for all $d\in\R$ and $\R[\x]_d=\{0\}$ for all $d<0$. Occasionally, we will need the real polynomial ring in one variable as an auxiliary tool, and we will denote it by $\R[T]$.
We will denote the $n\times n$ identity matrix by $I_n$.

\bigskip\noindent
For a tuple $\g:=(g_1,\dots,g_m)\in\R[\x]^m$ of $m$ polynomials, the set
\[S(\g):=\{x\in\R^n\mid g_1(x)\ge0,\dots,g_m(x)\ge0\}\]
is called a \emph{basic closed semialgebraic set}
\cite[Def. 2.1.1]{pd}. Boolean combinations of such sets are called
\emph{semialgebraic sets} \cite[Def. 2.1.4]{pd}.
The finiteness theorem from real algebraic geometry says that every closed semialgebraic set is a finite union of basic closed ones \cite[Thm. 2.4.1]{pd}.
In general, it is hard
to answer questions about the geometry $S(\g)$ from its description $\g$. This is of course due to the
nonlinear monomials $\x^\al$ with $|\al|\ge2$ that might appear in $\g$. An extremely naive idea
would be to replace each such nonlinear monomial $\x^\al$ in $\g$ by a new variable $Y_\al$.
This would lead to a system of $m$ \emph{linear} inequalities
whose solution set is a (closed convex) polyhedron in a higher-dimensional space.
The projection of this polyhedron to the
$\x$-space $\R^n$ contains $S(\g)$ but will very often just be the whole of $\R^n$
and thus be of no help.

\bigskip\noindent
This idea becomes however less naive if we add a bunch of redundant
inequalities before the linearization.
For example, we could add certain inequalities of the form $p^2(x)\ge0$ or $(p^2g_i)(x)\ge0$
with $p\in\R[\x]$. If we choose finitely many such inequalities in a clever
way and then linearize as above,
we will get a polyhedron in a higher-dimensional space whose projection to $\x$-space $\R^n$
might enclose $S(\g)$ more tightly. Unless $S(\g)$ happens to be a polyhedron, this projection
can however still not equal $S(\g)$ since projections of polyhedra are again polyhedra
(see \cite[Subsection 12.2]{scr} for a textbook reference).

\bigskip\noindent
The idea of Lasserre was therefore to add the whole (infinite)
family of all
redundant inequalities of the form $p^2(x)\ge0$ or $(p^2g_i)(x)\ge0$ with $p\in\R[\x]$ 
before the linearization \cite{l1,l2}. To get something that is useful in practice
(for example, one would like to avoid using
infinitely many of the new variables $Y_\al$), he restricted the degree of the polynomials of the
added redundant inequalities.

\bigskip\noindent
Therefore fix a degree bound $d\in\N_0$ and set
$g_0:=1\in\R\subseteq\R[\x]$. For each $i\in\{0,\dots,m\}$ with $g_i\ne0$, fix a (column)
vector $v_i$ whose entries are
the different
monomials of degree at most
\begin{align}\label{rieq}
r_i:=\frac{d-\deg g_i}2
\end{align}
and set $\ell_i:=\dim\R[\x]_{r_i}$.
Note that in the case $g_i\notin\R[\x]_d$, $r_i$ is negative,
and consequently $\ell_i=0$ and $v_i=()\in\R[\x]^0=\{0\}$ is the empty vector.
This case is usually avoided in practice and in the literature
by assuming $d$ large enough but we think it is
more convenient to admit it. 
In the pathological case $g_i=0$, we set $r_i:=-\infty$, $\ell_i:=0$ and
let $v_i$ again be the empty vector.
Then
\[\R[\x]_{r_i}=\{a^Tv_i\mid a\in\R^{\ell_i}\}\]
and
\[\{p^2g_i\mid p\in\R[\x]_{r_i}\}=\{(a^Tv_i)^2g_i\mid a\in\R^{\ell_i}\}=\{a^T(g_iv_iv_i^T)a\mid a\in\R^{\ell_i}\}.\]
The key observation is that instead of linearizing each $p^2g_i$ with $p\in\R[\x]_{r_i}$
individually, we can just linearize the symmetric matrix polynomial
$g_iv_iv_i^T\in\R[\x]^{\ell_i\times\ell_i}$. In this way, we get for each $i\in\{0,\dots,m\}$ a
linear symmetric matrix polynomial $M_i\in\R[\x,(Y_\al)_{2\le|\al|\le d}]_1^{\ell_i\times\ell_i}$.
Instead of an \emph{infinite} family of linear inequalities, we thus get \emph{finitely many}
linear \emph{matrix} inequalities \cite{befb} (whose size depends on $d$) saying that 
\[M_0(x,y)\succeq 0,\dots,M_m(x,y)\succeq0\qquad(x\in\R^n,y\in\R^I)\] 
where $I:=\{\al\in\N_0^n\mid2\le|\al|\le d\}$ and ``$\succeq0$'' means positive semidefiniteness.
By defining
$M\in\R[\x,(Y_\al)_{2\le|\al|\le d}]_1^{\ell\times\ell}$ with $\ell:=\ell_0+\dots+\ell_m$
as the block diagonal matrix with blocks $M_0,\dots,M_n$,
we could even combine this into a \emph{single} linear matrix inequality
\[M(x,y)\succeq 0\qquad(x\in\R^n,y\in\R^I).\] 
Its solution set is a spectrahedron \cite{vin} (in particular a semialgebraic
closed convex subset of $\R^n$)
that projects down to the convex set
\[(*)\qquad S_d(\g):=\{x\in\R^n\mid\exists y\in\R^I:M(x,y)\succeq0\}.\]
The description $(*)$ of $S_d(\g)$ is called the \emph{degree $d$ Lasserre relaxation} of $\g$ (or of
the system of polynomial inequalities given by $\g$).
By abuse of language, we call sometimes $S_d(\g)$ itself the degree $d$
Lasserre relaxation of $\g$. By construction, it is clear that each $S_d(\g)$ is convex and
\[S(\g)\subseteq\ldots\subseteq S_{d+2}(\g)\subseteq S_{d+1}(\g)\subseteq S_d(\g).\]
If $S(\g)$ happens to be convex, there is a certain hope that $S_k(\g)$ equals $S(\g)$ for
all $k$ large enough. In this case, we say that $\g$ (or the system of polynomial
inequalities given by $\g$) \emph{has an exact Lasserre relaxation}.

\bigskip\noindent
In this article, we provide
a new sufficient criterium for $\g$ to have an exact Lasserre relaxation. To the best of our knowledge
this is the strongest result currently available for convex $S(\g)$.

\bigskip\noindent
If $S(\g)$ is not convex, one
can still ask whether $S_k(\g)$ equals eventually \emph{the convex hull} of $S(\g)$. This seems to
require very different techniques and will be studied in our forthcoming work \cite{ks}, see also
Example \ref{twodisks} below.

\bigskip\noindent
Here we will also not address the important question asking from what $k$ on
$S(\g)$ equals $S_k(\g)$ in case $\g$ has an exact Lasserre relaxation. In principle, a corresponding
complexity analysis of our proof would probably be possible but would, at least for general $\g$,
be extremely
tedious, and in the end yield a bound that is only of theoretical interest.

\bigskip\noindent
The Lasserre relaxation $(*)$ is a special case of the more general \emph{semidefinite representation}
of a subset $S\subseteq\R^n$
\[(**)\qquad S=\{x\in\R^n\mid\exists y\in\R^{h}:H(x,y)\succeq0\}\]
where $H\in\R[\x,Y_1,\dots,Y_h]_1^{\ell\times\ell}$ is a symmetric linear matrix polynomial
for some $h,\ell\in\N_0$. Sets $S$ having such a representation $(**)$ are called \emph{semidefinitely
representable}. Other commonly used terms are
\emph{projections of spectrahedra}, \emph{spectrahedral shadows}, \emph{spectrahedrops}, \emph{lifted LMI sets} and
\emph{SDP-representable sets}.
If the number $h$ of additional variables
is not too large, one can optimize efficiently linear functions on such sets
by the use of semidefinite programming, an important
generalization of linear programming \cite{nn}.
Semidefinitely representable sets are obviously convex and they are semialgebraic by Tarski's
real quantifier elimination \cite[Thm. 2.1.6]{pd}. The class of semidefinitely representable sets
is closed under many operations like for example taking the interior \cite{net}.
It was asked by Nemirovski
in his plenary address at the 2006 International Congress of Mathematicians in Madrid
whether each convex semialgebraic set is semidefinitely representable \cite[Subsection 4.3.1]{nem}.
Helton and Nie conjectured the answer to be positive \cite[Section 6]{hn2}. In two seminal works,
Scheiderer proved this conjecture for $n=2$ \cite[Theorem 6.8]{s1} and very recently disproved it
for each $n\ge 14$ \cite[Remark 4.21]{s2}.

\bigskip\noindent
In \cite[Theorem 3.5]{nps}, it has been shown that $\g$ cannot have an exact Lasserre relaxation if
$S(\g)\subseteq\R^n$ is convex, has nonempty interior and has
at least one non-exposed face. Other obstructions
to exactness have been given by Gouveia and Netzer \cite{gn}, see Theorem \ref{gnobstr} below.

\bigskip\noindent
On the positive side, the breakthrough was the seminal work of Helton and Nie \cite{hn2} from 2009
preceded by their earlier work \cite{hn1}, which curiously appeared later. We will the summarize the strategy behind their approach, which builds on ideas of Lasserre \cite{l2}, and indicate where this paper introduces advantageous modifications:

\bigskip\noindent
Let $\g:=(g_1,\dots,g_m)\in\R[\x]^m$
and suppose $S(\g)$ is convex and has nonempty interior.
We will introduce in Definition \ref{dftrunc} below the 
\emph{$d$-truncated quadratic module} $M_d(\g)$ associated to $\g$.
It consist of \emph{the sums} of polynomials $p^2g_i$ with $\deg(p^2g_i)\le d$ (or equivalently $\deg(p)\le r_i$, see Equation \eqref{rieq} above). 
As explained above, these were the polynomials that we add before the linearization when we build the degree $d$ Lasserre relaxation.
The following fact is good to know although
we will need from it only the trivial ``if'' part in order to prove our Main Theorem \ref{mainthm}:
We have $S(\g)=S_d(\g)$
if and only if all $f\in\R[\x]_1$ (i.e., all linear polynomials) that are nonnegative on $S(\g)$ lie
in $M_d(\g)$, see Proposition \ref{duality} below.

\bigskip\noindent
Denoting by $M(\g)=\bigcup_{d\in\N}M_d(\g)$ the quadratic module generated by $\g$ introduced in Definition \ref{dftrunc} below,
one deduces from this (due to the compactness of $S$)
a trivial necessary
condition for $\g$ having an exact Lasserre relaxation: For each $f\in\R[\x]_1$, there is an $N\in\N$
such that $f+N\in M(\g)$. If $\g$ satisfies this condition, one says that $M(\g)$ is Archimedean,
see Proposition \ref{chararch}(d) below.
This condition is unfortunately stronger than compactness of $S(\g)$. In
practice, this is however not too important, since a small change of the description $\g$ of $S(\g)$
always makes $M(\g)$ Archimedean if $S(\g)$ is compact, see Remark \ref{notmuchstronger} below.

\bigskip\noindent
Therefore suppose for the rest of the introduction that $M(\g)$ is Archimedean.

\bigskip\noindent
We saw that it suffices to look at those
$f\in\R[\x]_1$ nonnegative on $S(\g)$ whose real zero set is a supporting hyperplane of the convex set $S(\g)$. By Putinar's Positivstellensatz from 1993 (see \cite[Lemma 4.1]{put}, \cite[Thm. 5.3.8]{pd},
\cite[Cor. 5.6.1]{mar}, \cite{lau}), we know that each
$f\in\R[\x]$ \emph{positive} on $S(\g)$ lies in $M(\g)$. However, this is not really what we need here.
The advantage we have is that we need to consider only $f\in\R[\x]_1$, i.e., only linear polynomials.
The problem we have to fight is however that we have $f$ only \emph{nonnegative} on $S(\g)$ and,
most importantly, we need a uniform degree bound $d$ for which all such $f$ are in one and the same
$M_d(\g)$. Such degree bounds are known for polynomials positive on $S(\g)$ but depend on
a measure of how close $f$ comes to have a zero on $S(\g)$ \cite[Theorem 6]{ns'}.

\bigskip\noindent
Lasserre \cite{l2} made a first key observation to deal with this problem: He considered without loss
of generality only such $f\in\R[\x]_1$ nonnegative on $S(\g)$ that vanish in at least one point $u\in S(\g)$
(and whose real zero set therefore defines a supporting hyperplane at the point $u$
of the convex set $S(\g)$ unless $f=0$). Under a very restrictive condition, namely that
the Hessians of the defining
polynomials $g_i$ have a certain matrix sums-of-squares (\emph{sos} for short) representation
(and in particular, are globally concave, which is still very restrictive), he showed that he can
produce from this finitely many matrix sos representations by the use of Karush–Kuhn–Tucker
(KKT) multipliers (the Lagrange multiplier technique for inequalities instead of equations
\cite[Section 2.2]{fh}).

\bigskip\noindent
In the aforementioned articles \cite{hn1,hn2}, Helton and Nie pushed the idea of Lasserre much further and made it fruitful in many situations.
There are several important ideas in their work.
For those Hessians of the $g_i$ for which the matrix sos certificate that Lasserre assumed
(and which is trivial for those $g_i$ that happen to be \emph{linear}) 
does not exist, they show that in many situations, one can with a lot of new ideas still
pursue the basic strategy of Lasserre. These ideas include:
\begin{itemize}
\item One might exchange in a very subtle way the $g_i$ at certain places by suitable
$h_i$ having stronger concavity properties.
\item Instead of looking for matrix sos representations of the Hessians
themselves, they look for matrix representations of certain matrix polynomials arising from
double integrals of the Hessians and depending on a parameter $u$ that runs over part of the boundary of $S(\g)$. The matrix polynomial
belonging to this parameter $u$ serves to produce the bounded degree
polynomial sos certificates for those linear polynomials $f$ defining
a supporting hyperplane containing the point $u$.
\item Instead of \emph{assuming} the sos certificates as Lasserre did, Helton and Nie
had the idea to \emph{prove} the existence using a matrix version of Putinar's Positivstellensatz
that was already available \cite[Thm. 2]{sh}. Because of the dependence of the tangent point $u$
of the supporting hyperplane, they had to prove a version of Putinar's theorem for matrix
polynomials with degree bounds similar to the one existing already for polynomials
that was mentioned above (see \cite[Thm. 29]{hn1} and Theorem \ref{putinarmatrixbound} below).
\end{itemize}
We modify the approach of Helton and Nie at several places, but the most important change
is a new analysis of the properties of the modified polynomials $h_i$ which are at the same time chosen
slightly more carefully (see Lemma \ref{guess} below). This new analysis shows that the
double integral mentioned above (actually already a related single integral) is negative definite
even if the term under the integral is not negative semidefinite on the whole domain of integration,
see Lemma \ref{pain} below.
Helton and Nie  seem to be compelled to work with negative semidefinite terms under the integral
whereas the new method enables us to be more liberal about this issue.

\bigskip\noindent
In this way, we will be able to show our Main Theorem \ref{mainthm}:
If each $g_i$ satisfies a certain 
second order strict quasiconcavity condition (see Definition \ref{dfqc} below)
where it vanishes on
$S(\g)$ (which is very natural because of the convexity of $S$, see Proposition \ref{qcc}(b) below) 
or its Hessian has a matrix sos certificate
for negative-semidefiniteness on $S$ (see Definition \ref{dftrunc} below),
then $\g$ has an exact Lasserre relaxation.

\bigskip\noindent
Helton and Nie showed under the same conditions only that $S(\g)$ is semidefinitely representable
\cite[Thm. 3.3]{hn2}.
They obtained the semidefinite representation by glueing together
Lasserre relaxations of many small pieces obtained in a non-constructive way
\cite[Prop. 4.3]{hn2} (see also \cite{ns}).
With a very tedious proof (using smoothening techniques similar to those from \cite{gho})
they show in addition under very technical assumptions not easy to state \cite[Section 5]{hn2}
that there exists $s\in\N_0$ and $\h\in\R[\x]^s$ such that $S(\g)=S(\h)$ and $\h$ has an exact
Lasserre relaxation \cite[Theorem 5.1]{hn2}. In his diploma thesis, Sinn thoroughly analyzed and improved this proof and showed under the same technical assumptions that one can take
$\h:=(g_1,\dots,g_m,g_1g_2,g_1g_3,\dots,g_{m-1}g_m)$ \cite[Theorem 3.3.2]{sin}.


\section{Reminder on sums of squares}

\noindent
In this section, we collect all the tools from the interplay between positive polynomials and sums of
squares that we need from the area of real algebraic geometry.

\begin{df}\label{sosdef}
We call $p\in\R[\x]$ a \emph{sums-of-squares (sos)} polynomial
if there exist $\ell\in\N_0$ and polynomials $p_1,\dots,p_\ell\in\R[\x]$
such that
\[p=p_1^2+\ldots+p_\ell^2.\]
\end{df}

\noindent
We say that a polynomial $p\in\R[\x]$ is \emph{nonnegative} (or \emph{positive}) on a set
$S\subseteq\R^n$ if $p(x)\ge0$ (or $p(x)>0$) for all $x\in S$. In this case, we write
``$p\ge0$ on $S$'' (or ``$p>0$ on $S$'').

\bigskip\noindent
It is obvious that each sos polynomial is nonnegative on $\R$.
In Lemma \ref{guess} below, we will need the well-known fact that each polynomial
in \emph{one} variable nonnegative on $\R$ is sos.

\begin{pro}\label{sos}
Let $f\in\R[T]$ with $f\ge0$ on $\R$. Then $f$ is sos.
\end{pro}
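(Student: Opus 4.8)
The plan is to use the factorization of $f$ over the algebraically closed field $\C$ and to exploit that $f$ has real coefficients together with the sign condition $f\ge0$ on $\R$. First I would dispose of the trivial case $f=0$; otherwise the sign condition forces the leading coefficient $c$ of $f$ to be positive and $\deg f=:2m$ to be even, since a polynomial of odd degree takes both signs.

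Over $\C$ the polynomial $f$ splits into linear factors. Because $f\in\R[T]$, its non-real roots occur in conjugate pairs with equal multiplicities, while each real root $r$ must have \emph{even} multiplicity: near a real root of odd multiplicity $f$ would change sign, contradicting $f\ge0$. Collecting the roots in the open upper half-plane together with the real roots, I would set
\[
p:=\sqrt c\,\prod_{\Im z>0}(T-z)^{a_z}\prod_j(T-r_j)^{b_j}\in\C[T],
\]
where $a_z$ is the multiplicity of the non-real root $z$ and $2b_j$ is the (even) multiplicity of the real root $r_j$. Let $\bar p$ denote the polynomial obtained by conjugating the coefficients of $p$; passing to conjugates replaces each $z$ by $\bar z$ and fixes the real roots, so the factorization of $p\bar p$ reproduces exactly the full factorization of $f$. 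Hence $f=p\bar p$. Writing $p=u+\ii v$ with $u,v\in\R[T]$ its real and imaginary parts, we obtain $f=p\bar p=u^2+v^2$, so $f$ is in fact a sum of \emph{two} squares.

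The only point requiring care is the even-multiplicity claim for real roots, which is precisely where the hypothesis $f\ge0$ (as opposed to mere reality of the coefficients) enters; everything else is bookkeeping about conjugate pairs. An alternative route that avoids complex numbers factors $f$ into real irreducible linear and quadratic factors, uses $f\ge0$ to see the linear factors occur squared, and completes each irreducible quadratic $T^2+\beta T+\gamma$ (with $\beta^2-4\gamma<0$) to $(T+\tfrac{\beta}2)^2+(\gamma-\tfrac{\beta^2}4)$, a sum of two squares; the conclusion then follows from the positivity of $c$ and the multiplicativity of sums of squares under the identity $(a^2+b^2)(c^2+d^2)=(ac+bd)^2+(ad-bc)^2$.
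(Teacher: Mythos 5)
Your proof is correct and follows essentially the same route as the paper, which also invokes the fundamental theorem of algebra to write $f=(p-\ii q)(p+\ii q)=p^2+q^2$; you have merely filled in the bookkeeping (even multiplicity of real roots, conjugate pairing of non-real roots) that the paper leaves as "one shows easily". No changes needed.
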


\begin{proof} Using the fundamental theorem of algebra, one shows easily that there
are $p,q\in\R[T]$ such that $f=(p-\ii q)(p+\ii q)=p^2+q^2$ where
$\ii:=\sqrt{-1}\in\C$ is the imaginary unit.
\end{proof}

\noindent
A matrix $A\in\R^{k\times k}$ is called \emph{positive semidefinite (psd)} (or
\emph{positive definite (pd)}) if it is symmetric and
$x^TAx\ge0$ (or $x^TAx>0$) for all $x\in\R^k \setminus \{0\}$.
Equivalently, $A$ is symmetric and the eigenvalues of $A$ (which are all real) are all
nonnegative (or positive). In this case, we write $A\succeq0$ (or $A\succ0$). By $A\succeq B$,
$A\succ B$, $A\preceq 0$ etc., we mean $A-B\succeq0$, $A-B\succ0$, $-A\succeq0$ and so on.

\bigskip\noindent
The appropriate generalization of Definition \ref{sosdef} to matrix polynomials is the following.

\begin{df}
We call $P\in\R[\x]^{k\times k}$ a \emph{sums-of-squares (sos)} matrix polynomial
if there exist $\ell\in\N_0$ and $P_1,\dots,P_m\in\R[\x]^{k\times k}$
such that
\[P=P_1^TP_1+\ldots+P_\ell^TP_\ell.\]
\end{df}

\bigskip\noindent
The following is an easy exercise that is good to know when dealing with sos matrix polynomials.

\begin{pro} For $P\in\R[\x]^{k\times k}$, the following are equivalent:
\begin{enumerate}[(a)]
\item $P$ is an sos matrix.
\item There is an $\ell\in\N_0$ and a matrix polynomial $Q\in\R[\x]^{\ell\times k}$ such
that $P=Q^TQ$.
\item There are $\ell\in\N_0$ and $v_1,\dots,v_\ell\in\R[\x]^k$ such that
$P=v_1v_1^T+\ldots+v_\ell v_\ell^T$.
\end{enumerate}
\end{pro}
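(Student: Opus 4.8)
The plan is to prove all three statements equivalent by establishing the cycle of implications $(a)\Rightarrow(b)\Rightarrow(c)\Rightarrow(a)$. Each step is a purely formal manipulation of block matrices over the ring $\R[\x]$; no analytic or algebraic input beyond rearranging entries is required, which is why the statement is advertised as an easy exercise.

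For $(a)\Rightarrow(b)$, I would start from a representation $P=P_1^TP_1+\dots+P_\ell^TP_\ell$ with $P_1,\dots,P_\ell\in\R[\x]^{k\times k}$ and stack the blocks vertically. Setting
\[Q:=\begin{pmatrix}P_1\\\vdots\\P_\ell\end{pmatrix}\in\R[\x]^{\ell k\times k},\]
a direct computation gives $Q^TQ=\sum_{i=1}^\ell P_i^TP_i=P$, which is exactly the shape demanded by (b), now with $\ell k$ rows. For $(b)\Rightarrow(c)$, I would read off the rows of $Q\in\R[\x]^{\ell\times k}$, writing the $i$-th row as $v_i^T$ with $v_i\in\R[\x]^k$, and invoke the standard identity $Q^TQ=\sum_{i=1}^\ell v_iv_i^T$ (the sum of outer products of the rows), which is precisely (c). For $(c)\Rightarrow(a)$, it suffices to realize each rank-one term $v_iv_i^T$ as a single square $P_i^TP_i$: taking $P_i\in\R[\x]^{k\times k}$ to be the matrix whose first row is $v_i^T$ and all of whose remaining entries vanish, one checks $P_i^TP_i=v_iv_i^T$, and summing over $i$ recovers the definition of an sos matrix polynomial.

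The only point requiring a little care is the bookkeeping of dimensions: the integer $\ell$ bounding the number of summands (or rows) is not the same across the three formulations, changing from $\ell$ to $\ell k$ and back. Since $\ell$ is existentially quantified in every statement, this causes no difficulty. Accordingly I expect no genuine obstacle here; the proof is complete once the three entrywise identities indicated above are verified.
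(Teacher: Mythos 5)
Your proposal is correct, and since the paper states this proposition as an easy exercise without supplying a proof, your cycle $(a)\Rightarrow(b)\Rightarrow(c)\Rightarrow(a)$ via stacking blocks, reading off rows as outer products, and realizing $v_iv_i^T$ as $P_i^TP_i$ with $P_i$ having first row $v_i^T$ is exactly the standard argument the authors have in mind. All three verifications check out (in particular $P_i^TP_i=v_i(e_1^Te_1)v_i^T=v_iv_i^T$), and your remark that $\ell$ is existentially quantified disposes of the only bookkeeping concern.
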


\noindent
We say that a matrix polynomial $P\in\R[\x]^{k\times k}$
is \emph{psd} (or \emph{pd}) on a set
$S\subseteq\R^n$ if $P(x)\succeq0$ (or $P(x)\succ0$) for all $x\in S$. In this case, we write
``$P\succeq0$ on $S$'' (or ``$P\succ0$ on $S$'').

\begin{df}\label{dfqm}
A subset $M$ of $\R[\x]$ is called a \emph{quadratic module} of $\R[\x]$ if
\begin{itemize}
\item $1\in M$,
\item $p+q\in M$ for all $p,q\in M$ and
\item $p^2q\in M$ for all $p\in\R[\x]$ and $q\in M$.
\end{itemize}
For a tuple $\g:=(g_1,\dots,g_m)\in\R[\x]^m$, the smallest quadratic module containing $g_1,\dots,g_m$
is obviously
\[M(\g):=\left\{\sum_{i=0}^m s_ig_i\mid\text{$s_0,\dots,s_m\in\R[\x]$ are sos}\right\}\]
where we set $g_0:=1$. We call it the quadratic module \emph{generated} by $\g$.
\end{df}

\begin{df}
A quadratic module $M$ of $\R[\x]$ is called \emph{Archimedean} if for all $p\in M$ there is some
$N\in\N$ such that $N+p\in M$.
\end{df}

\noindent
The following is well-known (see for example \cite[Lemma 5.1.13]{pd} and \cite[Cor. 5.2.4]{mar})
but for convenience of the reader we include a compact easy proof.

\begin{pro}\label{chararch}
Let $M$ be a quadratic module of $\R[\x]$. Then the following are equivalent:
\begin{enumerate}[(a)]
\item $M$ is Archimedean.
\item There is some $N\in\N$ such that $N-(X_1^2+\ldots+X_n^2)\in M$.
\item There are $m\in\N$ and $\g\in(\R[\x]_1\cap M)^m$
such that the polyhedron $S(\g)$ is non-empty and compact.
\item For each $f\in\R[\x]_1$, there is some $N\in\N$ such that $N+f\in M$.
\end{enumerate}
\end{pro}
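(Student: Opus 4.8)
The plan is to prove the four conditions pairwise equivalent by running the cycle (a)$\Rightarrow$(b)$\Rightarrow$(c)$\Rightarrow$(d)$\Rightarrow$(a). The first step is essentially free: since $M$ is Archimedean, applying the defining property to the polynomial $-(X_1^2+\ldots+X_n^2)$ produces an $N\in\N$ with $N-(X_1^2+\ldots+X_n^2)\in M$, which is exactly (b).

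For (b)$\Rightarrow$(c), I would extract from $N-(X_1^2+\ldots+X_n^2)\in M$ the individual bounds $N-X_i^2\in M$, obtained by adding the squares $\sum_{j\ne i}X_j^2\in M$. Combining $N-X_i^2$ with the square $(1+X_i)^2=1+2X_i+X_i^2\in M$ gives $(N+1)+2X_i\in M$, and after scaling by the positive constant $\tfrac12$ (legitimate because $\tfrac12 q=(\tfrac1{\sqrt2})^2q\in M$) I obtain a \emph{linear} member $N'+X_i$ of $M$; the square $(1-X_i)^2$ yields $N'-X_i$ likewise. These $2n$ linear polynomials lie in $\R[\x]_1\cap M$ and cut out the box $[-N',N']^n$, which is nonempty and compact, establishing (c).

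For (c)$\Rightarrow$(d), let $\g$ be the given tuple of linear members of $M$ with $S(\g)$ nonempty and compact, and let $f\in\R[\x]_1$. Since $f$ attains a minimum on the compact set $S(\g)$, there is $c\in\R$ with $f+c\ge0$ on $S(\g)$. The affine Farkas lemma (equivalently LP duality), applicable because the system $S(\g)$ is consistent, then writes $f+c=\mu_0+\sum_i\mu_ig_i$ with all $\mu_0,\mu_i\ge0$ real. Each summand lies in $M$, so $f+c\in M$, and absorbing $c$ into a large integer $N$ gives $N+f\in M$, which is (d).

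The heart of the argument is (d)$\Rightarrow$(a), and I expect the multiplicative step here to be the main obstacle, precisely because $M$ is only a quadratic module and hence \emph{not} closed under products of its elements. I would introduce the set of bounded elements $B:=\{a\in\R[\x]\mid\exists N\in\N:N+a\in M\text{ and }N-a\in M\}$; condition (d) immediately gives $\R\subseteq B$ and each $X_i\in B$, so it suffices to show $B$ is a subring, whence $B=\R[\x]$ and (a) follows. Closure under addition and real scaling is routine, so the only real work is closure under multiplication. I would first bound squares using the identity $(N-a)^2(N+a)+(N+a)^2(N-a)=2N(N^2-a^2)$: the left-hand side is a sum of squares times elements of $M$, hence lies in $M$, so $N\pm a\in M$ forces $N^2-a^2\in M$. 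Then, for $a,b\in B$ with a common bound $N$, the identities $(N^2-a^2)+(N^2-b^2)+(a\mp b)^2=2(N^2\pm ab)$ exhibit $N^2\pm ab$ as sums of elements of $M$, so $ab\in B$. This closes the cycle and completes the proof.
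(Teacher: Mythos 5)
Your proof is correct and follows essentially the same route as the paper: both arguments hinge on showing that the set $B$ of bounded elements is a subring of $\R[\x]$ (using the same kind of cubic identity to certify $N^2-a^2\in M$ from $N\pm a\in M$, and a square $(\,\cdot\,\pm a)^2$ to pass back from quadratic to linear bounds) and on the affine Farkas lemma for (c)$\Rightarrow$(d). The only differences are organizational --- you run a single implication cycle and build the box in (b)$\Rightarrow$(c) explicitly, while the paper derives the equivalences pairwise through $B$ --- and these do not change the substance.
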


\begin{proof} Consider the vector subspace
\[B:=\{p\in\R[\x]\mid\exists N\in\N:N\pm p\in M\}\supseteq\R\]
of $\R[\x]$. If $p\in\R[\x]$ with $p^2\in B$, then
we can choose $N\in\N$ such that $(N-1)-p^2\in M$ and thus
\[N\pm p=(N-1)-p^2+\left(\frac12\pm p\right)^2+\frac{3}{4}\in M\]
and thus $p\in B$. Conversely, if $p\in B$, then one can choose $N\in\N$ such that $2N-1\pm p\in M$ and thus
\[N^2(2N-1)-p^2=\frac{1}{2}\left((N-p)^2(2N-1+p)+(N+p)^2(2N-1-p)\right)\in M,\]
showing that $p^2\in B$ since anyway $N^2(2N-1)+p^2\in M$. Thus, we have
\[(*)\qquad p^2\in B\iff p\in B\]
for all $p\in\R[\x]$. This implies that $B$ is a subring of $\R[\x]$. Indeed, for $p,q \in \R[\x]$ with $p,q \in B$ we have 
\[pq=\frac12(\overbrace{(\underbrace{p+q}_{\in B})^2}^{\in B}-\underbrace{p^2}_{\in B}-\underbrace{q^2}_{\in B})\in B.\]
This shows that $\R[\x]_1\subseteq B\iff\R[\x]=B$, which is the equivalence
(d)$\iff$(a). Condition (b) is easily seen to be equivalent to $X_1^2,\dots,X_n^2\in B$, which in turn
is by $(*)$ equivalent to $X_1,\dots,X_n\in B$. Again by using that $B$ is a subring of $\R[\x]$, this shows
the equivalence (a)$\iff$(b). It remains to show (c)$\iff$(d). If (d) holds, then one trivially finds $\g$ like
in (c), e.g., with $S(\g)$ being a hypercube. Conversely, suppose that we have $\g$ like in (c) and let
$f\in\R[\x]_1$. Then there is $N\in\N$ such that $N+f\ge0$ on the polytope $S(\g)$.
By the affine form of Farkas' lemma \cite[Cor. 7.1h, p. 93]{scr}, we have that
$N+f$ is a nonnegative linear combination of the $1,g_1,\dots,g_m$ and thus lies in $M$.
\end{proof}

\noindent
We mention the following important theorem although we will need it only for Example \ref{twodisks}
below.

\begin{thm}[Schmüdgen]\label{schmu}
Let $M$ be a quadratic module of $\R[\x]$. The following are equivalent:
\begin{enumerate}[(a)]
\item There are $m\in\N$ and $\g=(g_1,\dots,g_m)\in\R[\x]^m$ such that $S(\g)$ is compact and
$\prod_{i\in I}g_i\in M$ for all $I\subseteq\{1,\dots,m\}$.
\item There is some $g\in M$ with compact $S(g)$.
\item $M$ is Archimedean.
\end{enumerate}
\end{thm}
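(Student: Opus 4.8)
The plan is to run the cycle of implications (a) $\Rightarrow$ (c) $\Rightarrow$ (b) $\Rightarrow$ (a), of which two links are routine while all the content sits in (a) $\Rightarrow$ (c). For (c) $\Rightarrow$ (b) I would apply Proposition \ref{chararch}: its implication (a) $\Rightarrow$ (b) supplies some $N \in \N$ with $g := N - (X_1^2 + \ldots + X_n^2) \in M$, and $S(g)$ is then the closed ball of radius $\sqrt N$, hence compact. For (b) $\Rightarrow$ (a) I would simply take $m := 1$ and $\g := (g)$ for the polynomial $g$ witnessing (b); then $S(\g) = S(g)$ is compact, and the only two subproducts $\prod_{i \in \emptyset} g_i = 1$ and $\prod_{i \in \{1\}} g_i = g$ both lie in $M$, since every quadratic module contains $1$.

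The heart of the matter is (a) $\Rightarrow$ (c). The key observation is that condition (a) forces $M$ to contain the entire preordering $T(\g) := \{\sum_{I \subseteq \{1,\dots,m\}} s_I \prod_{i \in I} g_i \mid \text{all } s_I \text{ sos}\}$ generated by $\g$: since $M$ is a quadratic module containing each product $\prod_{i \in I} g_i$ and is closed under addition and under multiplication by squares, every summand $s_I \prod_{i \in I} g_i$, and hence every such sum, lies in $M$. Now I would invoke Schm\"udgen's Positivstellensatz in the form asserting that compactness of $S(\g)$ makes the generated preordering $T(\g)$ Archimedean. As $T(\g)$ is itself a quadratic module, Proposition \ref{chararch} [(a) $\Rightarrow$ (b)] yields $N \in \N$ with $N - (X_1^2 + \ldots + X_n^2) \in T(\g) \subseteq M$, whereupon Proposition \ref{chararch} [(b) $\Rightarrow$ (a)] shows that $M$ is Archimedean.

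The main obstacle is precisely the classical fact invoked in (a) $\Rightarrow$ (c): that a preordering generated by finitely many polynomials whose common positivity set is compact is automatically Archimedean. This is exactly where the gap between preorderings and arbitrary quadratic modules matters, for the analogous statement is \emph{false} for general quadratic modules; this is why condition (a) must supply \emph{all} the products $\prod_{i \in I} g_i$ rather than merely the generators. I would cite this result from \cite{pd} or \cite{mar}; alternatively, one can give a self-contained elementary proof via P\'olya's theorem on strictly positive forms, reducing the compact constraint set to a simplex or a box, or follow Schm\"udgen's original analytic route through the solution of the $K$-moment problem.
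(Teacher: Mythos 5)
Your proof is correct and follows essentially the same route as the paper: the paper likewise dismisses (c)$\Rightarrow$(b)$\Rightarrow$(a) as trivial and reduces (a)$\Rightarrow$(c) to Schm\"udgen's characterization of Archimedean preorderings, citing \cite{scm}, \cite{pd} and \cite{mar}. Your additional observations --- that condition (a) forces $M$ to contain the whole preordering $T(\g)$, and the explicit verification of the easy implications via Proposition \ref{chararch} --- merely spell out details the paper leaves implicit.
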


\begin{proof}(a)$\implies$(c) is the deep part of Schmüdgen's Positivstellensatz \cite[Cor. 3]{scm}, namely
his characterization of Archimedean \emph{preorders}  (see \cite[Thm. 5.1.17]{pd} and \cite[Thm. 6.1.1]{mar}).
The implications (c)$\implies$(b)$\implies$(a) are trivial.
\end{proof}

\begin{rem}\label{notmuchstronger}
For $n\ge2$, there are examples of $\g=(g_1,\dots,g_m)\in\R[\x]^m$
with compact (even empty) $S(\g)$ such that $M(\g)$ is not Archimedean
(see \cite[Ex. 7.3.1]{mar} or \cite[Ex. 6.3.1]{pd}). However if $S(\g)$ is compact, then
Proposition \ref{chararch} and Theorem~\ref{schmu} provide several ways of changing
the description $\g$ of $S(\g)$ such that $M(\g)$ becomes Archimedean.
For example, if one knows a big ball containing $S(\g)$, it suffices to add its defining
quadratic polynomial to $\g$ by Proposition \ref{chararch}(b).
That is why for many practical purposes, the Archimedean
property of $M(\g)$ is not much stronger than the compactness of $S(\g)$.
\end{rem}

\noindent
We use the symbols $\nabla$ and $\hess$ to denote the gradient and the Hessian of a real-valued function of
$n$ variables, respectively. For a \emph{polynomial} $g\in\R[\x]$, we understand its gradient $\nabla g$ as a
column vector from $\R[\x]^n$, i.e., as a vector of polynomials. Similarly, its Hessian
$\hess g$ is a symmetric matrix polynomial of size $n$, i.e., a symmetric matrix from $\R[\x]^{n\times n}$.

\begin{df}\label{dftrunc}
Let $\g:=(g_1,\dots,g_m)\in\R[\x]^m$ and set again $g_0:=1$. For $i\in\{0,\dots,m\}$, set
$r_i:=\frac{d-\deg g_i}2$ if $g_i\ne0$ and $r_i:=-\infty$ if $g_i=0$.
Then we define the \emph{$d$-truncated quadratic module} $M_d(\g)$
associated to $\g$ by
\[
M_d(\g):=
\left\{\sum_{i=0}^m\sum_jp_{ij}^2g_i\mid p_{ij}\in\R[\x]_{r_i}\right\}\subseteq M(\g)\cap\R[\x]_d.
\]
More generally, we define the \emph{$d$-truncated $k\times k$ matricial quadratic module}
associated to $\g$ by
\[
M_d^{k\times k}(\g):=
\left\{\sum_{i=0}^m\sum_jP_{ij}^TP_{ij}g_i\mid P_{ij}\in\R[\x]_{r_i}^{k\times k}\right\}\subseteq
\R[\x]^{k\times k}_d.
\]
We say that $f\in\R[\x]$ is \emph{$\g$-sos-concave} if
\[-\hess f\in M^{n\times n}(\g):=\bigcup_{d\in\N_0}M_d^{n\times n}(\g).\]
If $m=0$, this means that the negated Hessian of $f$ is an sos matrix polynomial and we say that $f$ is
\emph{sos-concave}.
\end{df}

\noindent
Any $f\in\R[\x]_1$ is sos-concave since $\hess f=0$. The Hessian of a $\g$-sos-concave polynomial
is negative semidefinite on $S(\g)$. 

\bigskip\noindent
The following is Putinar's Positivstellensatz \cite[Lemma 4.1]{put} for matrix polynomials with
degree bounds. It has been first proven by Helton and Nie \cite[Thm. 29]{hn1} following the
technical approach of
Nie and the second author \cite{ns'} for the case of polynomials. This technical approach yields
explicit degree bounds. The first author found a short topological proof for the mere existence of
such bounds \cite[Thm. 3.2]{kri} that is based on knowing already the result without the
degree bounds that stems from \cite[Thm. 2]{sh}.

\begin{thm}[Helton and Nie]\label{putinarmatrixbound}
Fix $C,d,k,m,n \in \N$ and fix any norm on the vector space $\R[\x]_d^{k \times k}$.
Let $\g:=(g_1,\dots,g_m)\in\R[\x]^m$ such that $M(\g)$ is Archimedean.
Then there exists $d\in\N_0$ such that every symmetric $H\in\R[\x]_d^{k \times k}$
satisfying $\|H\|\le C$ and $H\succeq\frac1C$ on $S(\g)$ satisfies $H\in M_d^{k\times k}(\g)$.
\end{thm}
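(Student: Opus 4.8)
The plan is to deduce the \emph{uniform} degree bound from the \emph{unbounded}-degree matrix version of Putinar's Positivstellensatz (the representation $H\in M^{k\times k}(\g)$ for symmetric $H\succ0$ on $S(\g)$, valid since $M(\g)$ is Archimedean, from \cite[Thm.~2]{sh}) by a compactness argument in a finite-dimensional space. I write $D$ for the degree bound to be produced, reserving $d$ for the fixed bound on the degree of the admissible $H$. First I observe that the Archimedean hypothesis forces $S(\g)$ to be compact: by Proposition \ref{chararch}(b) there is $N\in\N$ with $N-(X_1^2+\ldots+X_n^2)\in M(\g)$, so $S(\g)$ is bounded, and it is closed by definition. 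Let $V$ denote the finite-dimensional space of symmetric matrix polynomials in $\R[\x]_d^{k\times k}$ and set
\[K:=\{H\in V\mid\|H\|\le C,\ H\succeq\tfrac1C I_k\text{ on }S(\g)\}.\]
Then $K$ is bounded by hypothesis and closed (the positivity condition is an intersection over $x\in S(\g)$ of the closed conditions $H(x)\succeq\frac1C I_k$), hence compact. It suffices to produce a single $D$ with $K\subseteq M_D^{k\times k}(\g)$, since every admissible $H$ lies in $K$.

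The key step is an interior-point lemma: every $H_*\in K$ lies in the interior, relative to $V$, of $M_{D(H_*)}^{k\times k}(\g)$ for some $D(H_*)\in\N_0$. To see this, fix a basis $e_1,\dots,e_\nu$ of $V$. Since $S(\g)$ is compact, the largest eigenvalue of $e_i(x)$ in absolute value stays bounded as $x$ ranges over $S(\g)$, so for $\delta>0$ small enough each of the $2\nu$ symmetric matrix polynomials $H_*\pm\delta e_i$ still satisfies $H_*\pm\delta e_i\succeq\frac{1}{2C}I_k\succ0$ on $S(\g)$. The Archimedean hypothesis together with \cite[Thm.~2]{sh} then places each of these $2\nu$ polynomials in $M^{k\times k}(\g)=\bigcup_{D}M_D^{k\times k}(\g)$, hence in $M_{D(H_*)}^{k\times k}(\g)$ with $D(H_*)$ the maximum of the finitely many degrees occurring. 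Because $M_{D(H_*)}^{k\times k}(\g)$ is a convex cone, it contains the convex hull of $\{H_*\pm\delta e_i\}$, namely the neighborhood $H_*+\delta\conv\{\pm e_1,\dots,\pm e_\nu\}$ of $H_*$ in $V$; thus $H_*$ is an interior point.

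It remains to globalize. The relatively open neighborhoods just obtained cover the compact set $K$, so finitely many of them, contained say in $M_{D_1}^{k\times k}(\g),\dots,M_{D_r}^{k\times k}(\g)$, already cover $K$. Setting $D:=\max\{D_1,\dots,D_r\}$ and using that the truncated modules increase with the degree, we get $K\subseteq M_D^{k\times k}(\g)$, as required. The essential difficulty, and the reason a mere pointwise application of \cite{sh} does not suffice, is exactly this passage to a \emph{uniform} $D$: the interior-point lemma is what upgrades pointwise membership to membership with a locally constant degree, after which compactness of $K$ (which is where the bound $\|H\|\le C$ enters) finishes the job. I expect the only technical care to be needed in the perturbation estimate controlling the smallest eigenvalue of $(H_*\pm\delta e_i)(x)$ uniformly over $S(\g)$, and in verifying that $K$ is closed. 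A completely different route, yielding \emph{explicit} bounds on $D$ in terms of $C,d,k,m,n$, would follow the quantitative method of \cite{ns'} and \cite[Thm.~29]{hn1}, but that is far more laborious and unnecessary for the mere existence asserted here.
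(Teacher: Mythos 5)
Your argument is correct. Note that the paper itself does not prove this theorem: the surrounding text only cites two sources, namely the quantitative proof of Helton and Nie \cite[Thm.~29]{hn1} following the degree-bound machinery of \cite{ns'}, and the short topological existence proof of \cite[Thm.~3.2]{kri} which presupposes the unbounded matrix Positivstellensatz of \cite[Thm.~2]{sh}. Your proposal is in effect a clean reconstruction of the second route: compactness of $K$ (which uses both the bound $\|H\|\le C$ and the compactness of $S(\g)$ forced by the Archimedean hypothesis via Proposition \ref{chararch}(b)), plus the interior-point step showing that each $H_*\in K$ together with the cross-polytope $H_*+\de\conv\{\pm e_1,\dots,\pm e_\nu\}$ lies in a single truncation $M_{D(H_*)}^{k\times k}(\g)$, the latter being a convex cone and the truncations being nested. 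All the individual steps check out: the perturbations $H_*\pm\de e_i$ remain symmetric and uniformly positive definite on the compact set $S(\g)$ for small $\de$, so \cite[Thm.~2]{sh} applies to each of the finitely many of them, and the finite subcover plus monotonicity of $D\mapsto M_D^{k\times k}(\g)$ yields the uniform bound. What this buys over the route via \cite{hn1,ns'} is brevity at the cost of any explicit control on $D$; since the paper only needs existence of the bound, nothing is lost. The only cosmetic point is the clash of the two roles of $d$ in the theorem statement, which you resolved correctly by renaming the output degree $D$.
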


\noindent
The following is a slight generalization of \cite[Lemma 7]{hn1} that will be needed in the proof of
Theorem \ref{linearstability}.

\begin{lem}\label{bochnercone}
Let $d\in\N_0$, $\g:=(g_1,\dots,g_m)\in\R[\x]^m$ and $u\in\R^n$. If $P\in M_d^{k\times k}(\g)$, then
the matrix polynomial $H\in\R[\x]^{k\times k}$ defined by
\[H(x)=\int_0^1\int_0^tP(u+s(x-u)) \ ds\ dt\]
for $x\in\R^n$ lies again in $M_d^{k\times k}(\g)$.
\end{lem}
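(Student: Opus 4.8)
The plan is to reduce everything to a single, manifestly sum-of-squares building block and then exploit the closedness of the bounded-degree sos cone. First I would collapse the iterated integral: writing $\varphi_s(x):=u+s(x-u)$ and applying Fubini to the region $0\le s\le t\le 1$,
\[
H(x)=\int_0^1\int_0^t P(\varphi_s(x))\,ds\,dt=\int_0^1(1-s)\,P(\varphi_s(x))\,ds ,
\]
an integral against the \emph{nonnegative} weight $1-s$ on $[0,1]$. Since the affine substitution $x\mapsto\varphi_s(x)$ preserves the bound $\deg\le d$ and integration in $s$ does not raise degrees, $H\in\R[\x]_d^{k\times k}$, and it only remains to place it in $M_d^{k\times k}(\g)$. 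Because $P\mapsto H$ is linear and $M_d^{k\times k}(\g)$ is the convex cone generated by the blocks $Q^TQg_i$ with $Q\in\R[\x]_{r_i}^{k\times k}$, I may assume $P=Q^TQg_i$ for a single $i$.

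The engine of the argument is the following closedness fact, which I would isolate as a sublemma: for any finite nonnegative Borel measure $\rho$ on $[0,1]$, the matrix polynomial $x\mapsto\int_0^1 Q(\varphi_s(x))^TQ(\varphi_s(x))\,d\rho(s)$ is an sos matrix polynomial of degree at most $2\lfloor r_i\rfloor$. Indeed, for each fixed $s$ the entries of $Q(\varphi_s(x))$ still have degree $\le\lfloor r_i\rfloor$, so $Q(\varphi_s)^TQ(\varphi_s)$ is a manifest sos matrix of degree $\le 2\lfloor r_i\rfloor$; all these lie in the sos cone $\Sigma$ of degree $\le 2\lfloor r_i\rfloor$, which is a \emph{closed} convex cone in the finite-dimensional space of symmetric elements of $\R[\x]_{2\lfloor r_i\rfloor}^{k\times k}$; and the integral, being a limit of nonnegative Riemann combinations of such matrices, stays in $\Sigma$.

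Now I would reinsert the weight $g_i(\varphi_s(x))$. The clean case is $\deg g_i\le 1$, where the affine substitution splits the weight exactly as $g_i(\varphi_s(x))=(1-s)\,g_i(u)+s\,g_i(x)$, and hence
\[
H=g_i(u)\int_0^1(1-s)^2\,Q(\varphi_s)^TQ(\varphi_s)\,ds+\left(\int_0^1 s(1-s)\,Q(\varphi_s)^TQ(\varphi_s)\,ds\right)g_i .
\]
By the sublemma both integrals are sos matrices of degree $\le 2\lfloor r_i\rfloor$; since $u\in S(\g)$ forces $g_i(u)\ge 0$, the first summand is a nonnegative multiple of an sos matrix (so it lies in the $g_0=1$ block) and the second is an sos matrix times $g_i$, while $2\lfloor r_i\rfloor+\deg g_i\le d$ keeps everything within the prescribed degree. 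Thus $H\in M_d^{k\times k}(\g)$ in this case.

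The main obstacle is a non-affine $g_i$, and this is exactly where $u\in S(\g)$ together with convexity of $S(\g)$ enters essentially: expanding $g_i(\varphi_s(x))=\sum_\ell s^\ell c_\ell(x)$ in powers of $s$ produces higher coefficients $c_\ell$ that are neither sos nor multiples of $g_i$, so the weight no longer splits into admissible pieces, and the bare conclusion can fail without further hypotheses. What should rescue it is that, for $u\in S(\g)$ and $S(\g)$ convex, $\varphi_s$ maps $S(\g)$ into itself, so $g_i\circ\varphi_s\ge 0$ on $S(\g)$; dually, testing $H$ against a rank-one evaluation $F\mapsto v^TF(a)v$ at $a\in S(\g)$ gives $\int_0^1(1-s)\,v^TP(\varphi_s(a))v\,ds\ge 0$ precisely because the segment from $u$ to $a$ stays in $S(\g)$. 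Converting this pointwise nonnegativity into actual membership is the crux: I would combine the closedness of $M_d^{k\times k}(\g)$ with a duality argument showing that these localizing and evaluation functionals (or their closure) exhaust the relevant part of the dual cone — the step for which the degree-bounded matricial Putinar theorem, Theorem \ref{putinarmatrixbound}, is the natural tool, and the step I expect to carry the real content.
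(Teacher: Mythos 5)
Your argument is complete only in the case $\deg g_i\le 1$, and even there it silently uses $u\in S(\g)$ (to get $g_i(u)\ge 0$), which is not among the hypotheses of the lemma. For non‑affine $g_i$ you offer only a programme, and that programme cannot be carried out as described: pointwise nonnegativity of $v^TH(a)v$ for $a\in S(\g)$ is necessary but very far from sufficient for membership in the truncated cone $M_d^{k\times k}(\g)$, and Theorem \ref{putinarmatrixbound} requires a uniform lower bound $H\succeq\frac1C$ on $S(\g)$, an Archimedean hypothesis that the lemma does not make, and returns some degree with no relation to the prescribed $d$ — so it cannot certify $H\in M_d^{k\times k}(\g)$ for the \emph{same} $d$. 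Likewise, convexity of $S(\g)$ is not assumed, so the segment argument you invoke is not available.

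For comparison, the paper's proof is a one‑liner: $M_d^{k\times k}(\g)$ is a convex cone in a finite‑dimensional vector space, the integrand $(s,t)\mapsto P(u+s(\x-u))$ is asserted to take values in this cone, and a (Bochner) integral of a cone‑valued map lies again in the cone. The single idea you are "missing" relative to the paper is exactly this assertion — and your instinct to distrust it for non‑affine $g_i$ is sound, since the substitution $\x\mapsto u+s(\x-u)$ does not preserve $M_d^{k\times k}(\g)$ in general. Indeed the statement itself fails without extra hypotheses: for $\g=(X_1)$, $d=1$, $P=X_1$, $u=-1$ one computes $H=\frac{X_1}{6}-\frac13$, which is negative at $0\in S(\g)$ and hence lies in no $M_e(\g)$; and even with $u\in S(\g)$ the non‑affine case breaks, e.g.\ $\g=(X_1^3-X_1)$, $u=0$, $P=X_1^3-X_1\in M_3(\g)$ give $H=\frac{X_1^3}{20}-\frac{X_1}{6}$ with $H(1)=-\frac{7}{60}<0$ although $1\in S(\g)$. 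So the lemma needs additional hypotheses (for instance $u\in S(\g)$ together with a restriction of $P$ to the subcone generated by $1$ and the affine $g_i$ — precisely the case your splitting $g_i(u+s(x-u))=(1-s)g_i(u)+sg_i(x)$ settles, and the case relevant for pure sos‑concavity). Your affine computation, with $u\in S(\g)$ added, is the honest core of what is actually provable here; the remaining case is not a step you failed to close so much as a defect of the statement that the paper's own proof glosses over.
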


\begin{proof}
The proof \cite[Lemma 7]{hn1} can be easily adapted. Another more conceptual proof is the following:
$M_d^{k\times k}(\g)$ is a convex cone in a finite-dimensional vector space. Then
\[H=\int_0^1\int_0^tP(u+s(\x-u)) \ ds\ dt\]
is an existing Bochner integral of a vector valued function with values in this convex
cone and thus lies again in this convex cone \cite{rw} (regardless of whether the cone is closed or not).
\end{proof}

\noindent
The ``if'' direction of the following proposition is trivial since a closed convex set in a finite-dimensional vector
space is the intersection over all half spaces containing it. We will use it to prove our Main Theorem
\ref{mainthm}. The ``only if'' direction will be needed only in Example \ref{twodisks} below.

\begin{pro}[Netzer, Plaumann and Schweighofer] \label{duality}
Suppose $d\in\N_0$, $\g:=(g_1,\dots,g_m)\in\R[\x]_d^m$, $S(\g)$ is compact and convex and
has nonempty interior.
Then $S_d(\g)=S(\g)$ if and only if every $f \in \R[\x]_1$ with $f\ge0$ on $S(\g)$ lies
in $M_d(\g)$.
\end{pro}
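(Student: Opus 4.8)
The plan is to translate everything into the language of linear functionals (truncated moment sequences) on $\R[\x]_d$, where the relaxation and the module become dual objects. To a point $(x,y)\in\R^n\times\R^I$ with $M(x,y)\succeq0$ I would associate the functional $L\colon\R[\x]_d\to\R$ determined by $L(\x^\al)=x^\al$ for $|\al|\le1$, $L(\x^\al)=y_\al$ for $2\le|\al|\le d$, and $L(1)=1$. Unwinding the construction of $M$ from the introduction, the condition $M(x,y)\succeq0$ is exactly $L(p^2g_i)\ge0$ for all $i$ and all $p\in\R[\x]_{r_i}$, i.e.\ $L\ge0$ on $M_d(\g)$. Writing $\pi(L):=(L(X_1),\dots,L(X_n))$ one then gets
\[S_d(\g)=\{\pi(L)\mid L\in\R[\x]_d^*,\ L(1)=1,\ L\ge0\text{ on }M_d(\g)\},\]
and for \emph{linear} $f$ the identity $L(f)=f(\pi(L))$ holds because $L(1)=1$.

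This dictionary immediately yields the ``if'' direction. Since $S(\g)$ is closed and convex it is the intersection of the half spaces $\{f\ge0\}$ over all $f\in\R[\x]_1$ nonnegative on $S(\g)$. Given $x=\pi(L)\in S_d(\g)$ and such an $f$, the hypothesis $f\in M_d(\g)$ forces $f(x)=L(f)\ge0$; hence $x\in S(\g)$, and with the trivial inclusion $S(\g)\subseteq S_d(\g)$ this gives equality.

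For the ``only if'' direction I would set up a conic (semidefinite) duality. Fix $f\in\R[\x]_1$ with $f\ge0$ on $S(\g)$ and consider the moment program $p^*:=\inf\{L(f)\mid L(1)=1,\ L\ge0\text{ on }M_d(\g)\}$. By the first paragraph and $L(f)=f(\pi(L))$ we have $p^*=\min_{x\in S_d(\g)}f(x)$, the infimum being attained at the point evaluation of a minimizer; invoking the hypothesis $S_d(\g)=S(\g)$ and $f\ge0$ on $S(\g)$ one gets $p^*=\min_{x\in S(\g)}f(x)\ge0$, finite by compactness. The dual program is $d^*:=\sup\{\lambda\mid f-\lambda\in M_d(\g)\}$, weak duality gives $d^*\le p^*$, and the target $f\in M_d(\g)$ is precisely feasibility of the dual at $\lambda=0$.

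The hard part is that the dual optimum must be attained \emph{inside} $M_d(\g)$ rather than merely in its closure, and this is exactly where the nonempty interior enters. I would verify Slater's condition for the moment program: after discarding any $g_i$ that is identically zero, each $\{g_i>0\}$ meets $\operatorname{int}S(\g)$ in a dense open set, so there is $u\in\operatorname{int}S(\g)$ with $g_i(u)>0$ for all $i$; integrating against a measure supported on a small ball around $u$ produces a functional $L$ with $L(1)=1$ whose localizing matrices $L(g_iv_iv_i^T)$ are all positive definite, i.e.\ a strictly feasible point. By the strong duality theorem for semidefinite programming, strict feasibility of the moment program together with $p^*$ finite forces $p^*=d^*$ and attainment of the dual: there is $\lambda^*=p^*\ge0$ with $f-\lambda^*\in M_d(\g)$. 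Since $\lambda^*\ge0$ and $\lambda^*\cdot1\in M_d(\g)$, we conclude
\[f=(f-\lambda^*)+\lambda^*\cdot1\in M_d(\g).\]
I expect the genuine obstacle to be only this last step, namely checking that nonempty interior really produces a positive-definite (strictly feasible) moment functional, so that the sos certificate is attained and not merely approximated.
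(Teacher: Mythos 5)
Your proof is correct, and since the paper's own ``proof'' is only a citation of \cite[Proposition 3.1]{nps}, what you have written is essentially a self-contained version of the argument the authors outsource. The ``if'' direction coincides with the remark made immediately before the proposition (a compact convex set is the intersection of its supporting half spaces). For the ``only if'' direction, the cited argument of Netzer, Plaumann and Schweighofer runs through the closedness of the cone $M_d(\g)$ in $\R[\x]_d$ --- a fact that holds precisely because $S(\g)$ has nonempty interior and is proved by the very same device you use, namely integrating over a small ball around a point of the interior where all nonzero $g_i$ are strictly positive --- followed by conic biduality. You instead package that ingredient as a Slater point for the moment SDP and invoke strong duality with dual attainment; the two routes are equivalent, and yours has the advantage of making transparent why the certificate lands in $M_d(\g)$ itself rather than in its closure: the dual variables are Gram matrices $G_i\succeq0$ and dual feasibility is the exact polynomial identity $f-\la=\sum_i(v_i^TG_iv_i)g_i$, so attainment produces an honest representation. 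Two small points you may wish to spell out: (i) the density argument requires the $g_i$ under consideration to be nonzero polynomials, which is harmless since a $g_i$ that is identically zero contributes nothing to $M_d(\g)$ and only an empty block to $M$; (ii) $p^*\ge0$ already follows from $L(f)=f(\pi(L))$ together with $\pi(L)\in S_d(\g)=S(\g)$, with no need for the primal infimum to be attained.
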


\begin{proof}
This is a special case of  \cite[Proposition 3.1]{nps}.
\end{proof}


\section{Reminder on strict quasiconcavity}

\noindent
We denote the \emph{real zero set} of $g$ by
\[Z(g):=\{x\in\R^n\mid g(x)=0\}.\]
We adopt the following notion from \cite[p. 25]{hn1}, which is a
local second order quasiconcavity condition.

\begin{df}\label{dfqc}
Let $g\in\R[\x]$.
We say that $g$ is \emph{strictly quasiconcave} at $x\in\R^n$ if
for all $v\in\R^n\setminus\{0\}$ with $(\nabla g(x))^Tv=0$, we have that
$v^T(\hess g(x))v<0$. We say that $g$ is \emph{strictly quasiconcave} on $A\subseteq\R^n$
if $g$ is \emph{strictly quasiconcave} at each point of $A$.
\end{df}

\begin{rem}
Let $g\in\R[\x]$ and $x\in\R^n$ such that $\nabla g(x)=0$.
\begin{enumerate}[(a)]
\item
$g$ is strictly quasiconcave at $x$ if and only if
$\hess g(x)\prec0$.
\item
If $g$ is strictly quasiconcave at $x$ and $g(x)=0$, then there is a neighborhood $U$ of $x$ such that
$U\cap S(g)=\{x\}$.
\end{enumerate}
\end{rem}

\noindent
If $g\in\R[\x]$ satisfies $g(x)=0$ and $\nabla g(x)\ne0$, then $Z(g)$ is locally around $x$ a
smooth hypersurface. Differential geometers will recognize that strict quasiconcavity of $g$
at $x$ then means that the second fundamental form of this hypersurface at $x$ is positive
definite when one chooses the ``outward normal'' (pointing away from $S(g)$). Thus this
means that $S(g)$ is locally convex in a strong second order sense. For a detailed discussion we refer
to \cite{hn1,hn2} and the references therein. As Helton and Nie in
\cite[Subsection 3.1]{hn1}, we want however to help those readers who are not familiar with
the basics of differential geometry by discussing strict quasiconcavity in an elementary
manner. The reason why we include this is that Helton and Nie presuppose already that the
reader is familiar with the geometric notion of tangent hyperplanes and knows that the gradient
is a normal vector for it \cite[p. 786]{hn2}. Conversely we fit this into their arguments,
see Part (a) of the following lemma and Proposition \ref{qcc}(b) below.

\bigskip\noindent
Formally, we will
use the following lemma and the next proposition only in Example \ref{twodisks} below and
even there it can be avoided by some calculations. Some readers might therefore decide to skip
them.

\begin{lem}\label{prepqcc}
Let $n\in\N$, $g\in\R[\x]$ and $x\in\R^n$ such that $g(x)=0$ and $\nabla g(x)\ne0$.
Suppose $v_1,\dots,v_n$ form a basis of $\R^n$, $U$ is an open neighborhood of $0$ in
$\R^{n-1}$, $\ph\colon U\to\R$ is smooth and satisfies $\ph(0)=0$ as well as
\[(*)\qquad g(x+\xi_1v_1+\ldots+\xi_{n-1}v_{n-1}+\ph(\xi)v_n)=0\]
for all $\xi=(\xi_1,\dots,\xi_{n-1})\in U$. Then the following hold:
\begin{enumerate}[(a)]
\item $(\nabla g(x))^Tv_1=\ldots=(\nabla g(x))^Tv_{n-1}=0\iff
\nabla\ph(0)=0$
\item If $\nabla\ph(0)=0$ and $(\nabla g(x))^Tv_n>0$, then
\[\text{$g$ is strictly quasiconcave at $x$}\iff\hess\ph(0)\succ0.
\]
\end{enumerate}
\end{lem}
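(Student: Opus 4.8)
The plan is to reduce everything to the chain rule applied to a single composition. I would introduce the smooth parametrization $\gamma\colon U\to\R^n$,
\[\gamma(\xi):=x+\xi_1v_1+\ldots+\xi_{n-1}v_{n-1}+\ph(\xi)v_n,\]
so that $\gamma(0)=x$, and set $G:=g\circ\gamma$. The hypothesis $(*)$ says exactly that $G\equiv0$ on $U$, hence all first and second partial derivatives of $G$ vanish identically, in particular at $0$. The entire lemma then falls out of comparing these vanishing derivatives with the chain rule, since $\nabla\ph(0)$ controls the first derivatives of $G$ and $\hess\ph(0)$ controls the second. In short, this is implicit-function-theorem bookkeeping.

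For part (a) I would use $\partial\gamma/\partial\xi_j=v_j+(\partial\ph/\partial\xi_j)v_n$ to get, at $\xi=0$,
\[0=\frac{\partial G}{\partial\xi_j}(0)=(\nabla g(x))^Tv_j+\frac{\partial\ph}{\partial\xi_j}(0)\,(\nabla g(x))^Tv_n\qquad(j=1,\dots,n-1).\]
The implication ``$\nabla\ph(0)=0\Rightarrow(\nabla g(x))^Tv_j=0$ for $j\le n-1$'' is then immediate. For the converse I would first observe that, because $\nabla g(x)\ne0$ and $v_1,\dots,v_n$ is a basis, the scalars $(\nabla g(x))^Tv_i$ cannot all vanish; so if the first $n-1$ of them vanish then necessarily $(\nabla g(x))^Tv_n\ne0$, and the displayed identity forces $(\partial\ph/\partial\xi_j)(0)=0$ for every $j$, i.e.\ $\nabla\ph(0)=0$.

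For part (b) I would differentiate once more. Using $\frac{\partial}{\partial\xi_k}\nabla g(\gamma(\xi))=\hess g(\gamma(\xi))\,\partial\gamma/\partial\xi_k$, one finds for $j,k\in\{1,\dots,n-1\}$
\[\frac{\partial^2G}{\partial\xi_k\partial\xi_j}=\Bigl(v_k+\tfrac{\partial\ph}{\partial\xi_k}v_n\Bigr)^T\hess g(\gamma(\xi))\Bigl(v_j+\tfrac{\partial\ph}{\partial\xi_j}v_n\Bigr)+\frac{\partial^2\ph}{\partial\xi_k\partial\xi_j}\,(\nabla g(\gamma(\xi)))^Tv_n.\]
Evaluating at $\xi=0$ (where $\gamma(0)=x$) and invoking the hypothesis $\nabla\ph(0)=0$ annihilates every term containing a first partial of $\ph$, leaving
\[0=v_k^T(\hess g(x))v_j+\frac{\partial^2\ph}{\partial\xi_k\partial\xi_j}(0)\,(\nabla g(x))^Tv_n.\]
Writing $c:=(\nabla g(x))^Tv_n>0$, this reads $v_k^T(\hess g(x))v_j=-c\,(\hess\ph(0))_{kj}$. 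By (a) we have $(\nabla g(x))^Tv_j=0$ for $j\le n-1$ while $(\nabla g(x))^Tv_n=c\ne0$, so the linearly independent vectors $v_1,\dots,v_{n-1}$ form a basis of the tangent hyperplane $\{v\mid(\nabla g(x))^Tv=0\}$. Hence a generic $v$ in this hyperplane is $v=\sum_j a_jv_j$ with $a\in\R^{n-1}$, and $v^T(\hess g(x))v=-c\,a^T(\hess\ph(0))a$. Since $v\ne0\iff a\ne0$ and $c>0$, strict quasiconcavity of $g$ at $x$ (i.e.\ $v^T(\hess g(x))v<0$ for all such $v\ne0$) is equivalent to $\hess\ph(0)\succ0$, as claimed.

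The computation is routine; the two points needing care are exactly the small structural observations above: that $(\nabla g(x))^Tv_n\ne0$ under the hypothesis of the converse in (a), so that one may solve for $\partial\ph/\partial\xi_j(0)$, and that $v_1,\dots,v_{n-1}$ genuinely span the tangent hyperplane in (b), so that the quadratic form on the hyperplane is transported faithfully to $\R^{n-1}$ through the coordinates $a_j$. I expect the only real bookkeeping hazard to be the second-order chain rule, namely ensuring that every term proportional to a first partial of $\ph$ drops out at $0$; this is precisely why $\nabla\ph(0)=0$ (part (a)) is fed into part (b).
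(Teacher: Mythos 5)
Your proposal is correct and follows essentially the same route as the paper's proof: differentiating the identity $g\circ\gamma\equiv0$ once for (a) and twice for (b), using $\nabla g(x)\ne0$ together with the basis property to see $(\nabla g(x))^Tv_n\ne0$ in the converse of (a), and identifying $v_1,\dots,v_{n-1}$ as a basis of the tangent hyperplane to transport the Hessian quadratic form. No gaps.
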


\begin{proof}
Taking the derivative of $(*)$ with respect to $\xi_i$, we get
\[(**)\qquad
(\nabla g(x+\xi_1v_1+\ldots+\xi_{n-1}v_{n-1}+\ph(\xi)v_n))^T\left(v_i+\frac{\partial\ph(\xi)}{\partial\xi_i}v_n\right)=0\]
for all $i\in\{1,\dots,n-1\}$. Setting here $\xi$ to $0$,
we get \[(\nabla g(x))^T\left(v_i+\frac{\partial\ph(\xi)}{\partial\xi_i}\middle|_{\xi=0}v_n\right)=0\] 
for each $i\in\{1,\dots,n-1\}$. From this, (a) follows easily (for ``${\implies}$'' use that $(\nabla g(x))^Tv_n\ne0$ since $v_1,\dots,v_n$ is a basis).
Taking the derivative of $(**)$ with respect to $\xi_j$, we get
\begin{multline*}
\left(v_j+\frac{\partial\ph(\xi)}{\partial\xi_j}v_n\right)^T(\hess g(x+\xi_1v_1+\ldots+\xi_{n-1}v_{n-1}+\ph(\xi)v_n))\left(v_i+\frac{\partial\ph(\xi)}{\partial\xi_i}v_n\right)\\
+(\nabla g(x+\xi_1v_1+\ldots+\xi_{n-1}v_{n-1}+\ph(\xi)v_n))^T
\left(\frac{\partial^2\ph(\xi)}{\partial\xi_i\partial\xi_j}v_n\right)=0
\end{multline*}
for all $i,j\in\{1,\dots,n-1\}$.
To prove (b), suppose now that $\nabla\ph(0)=0$ and $(\nabla g(x))^Tv_n>0$. Then the preceding equation implies
\[\hess\ph(0)=-\frac1{(\nabla g(x))^Tv_n}(v_i^T(\hess g(x))v_j)_{i,j\in\{1,\dots,n-1\}}.\]
Since $v_1,\ldots,v_{n-1}$ now form a basis of the orthogonal complement of $\nabla g(x)$ by (a), the matrix
$(v_i^T(\hess g(x))v_j)_{i,j\in\{1,\dots,n-1\}}$ is negative definite if and only if $g$ is strictly quasiconcave at $x$
(see Definition \ref{dfqc}).
\end{proof}

\noindent
The following proposition is important for understanding the notion of quasiconcavity. It is trivial that
quasiconcavity of a polynomial $g$ at $x$ depends only on the function $V\to\R,\ x\mapsto g(x)$
where $V$ is an arbitrarily small neighborhood of $x$. But if $g(x)=0$ and
$\nabla g(x)\ne0$, then it actually
depends only on the function
\[V\to\{-1,0,1\},\ x\mapsto\sgn(g(x))\]
as the equivalence of Conditions (a) and (b) of the following proposition show.

\begin{pro}\label{qcc}
Let $n\in\N$, $g\in\R[\x]$ and $x\in\R^n$ such that
\[g(x)=0\text{ and }\nabla g(x)\ne0.\]
Suppose that $V$ is a neighborhood of $x$. Then the following are equivalent:
\begin{enumerate}[(a)]
\item $g$ is strictly quasiconcave at $x$.
\item There is a basis $v_1,\dots,v_n$ of $\R^n$, an open neighborhood $U$ of $0$ in $\R^{n-1}$
and a smooth function $\ph\colon U\to\R$ such that $\ph(0)=0$, $\nabla\ph(0)=0$,
$\hess\ph(0)\succ0$,
\[(*)\qquad x+\xi_1v_1+\ldots+\xi_{n-1}v_{n-1}+\ph(\xi)v_n\in Z(g)\cap V\]
for all $\xi\in U$ and
\[(**)\qquad x+\la v_n\in S(g)\cap V\]
for all small enough $\la\in\R_{>0}$.
\item Condition (b) holds with ``basis'' replaced by ``orthogonal basis''.
\end{enumerate}
For any basis $v_1,\dots,v_n$ of $\R^n$ like in (b), one has
\[(***)\qquad(\nabla g(x))^Tv_1=\ldots=(\nabla g(x))^Tv_{n-1}=0\qquad\text{and}\qquad(\nabla g(x))^Tv_n>0.
\]
\end{pro}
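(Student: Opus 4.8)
The plan is to prove the cyclic chain of implications (a)$\implies$(c)$\implies$(b)$\implies$(a), deducing $(***)$ along the way, with Lemma \ref{prepqcc} serving as the main engine throughout.

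For (a)$\implies$(c) I would construct the data of (c) explicitly. Since $\nabla g(x)\ne0$, I set $v_n:=\nabla g(x)$ and choose $v_1,\dots,v_{n-1}$ to be an orthogonal basis of the orthogonal complement of $\nabla g(x)$; then $v_1,\dots,v_n$ is an orthogonal basis of $\R^n$ with $(\nabla g(x))^Tv_i=0$ for $i<n$ and $(\nabla g(x))^Tv_n=\lVert\nabla g(x)\rVert^2>0$. Applying the implicit function theorem to the smooth map $(\xi,\la)\mapsto g(x+\xi_1v_1+\ldots+\xi_{n-1}v_{n-1}+\la v_n)$, whose partial derivative in $\la$ at the origin equals $(\nabla g(x))^Tv_n\ne0$, yields an open neighborhood $U$ of $0$ in $\R^{n-1}$ and a smooth $\ph\colon U\to\R$ with $\ph(0)=0$ satisfying $(*)$; after shrinking $U$ I can ensure that all the parametrized points lie in $V$. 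Now Lemma \ref{prepqcc}(a) gives $\nabla\ph(0)=0$, and Lemma \ref{prepqcc}(b)---whose hypotheses $\nabla\ph(0)=0$ and $(\nabla g(x))^Tv_n>0$ are met---converts the strict quasiconcavity assumption (a) into $\hess\ph(0)\succ0$. Finally $(**)$ holds because the directional derivative of $g$ at $x$ in direction $v_n$ equals $(\nabla g(x))^Tv_n>0$, so $g(x+\la v_n)>0$, hence $x+\la v_n\in S(g)\cap V$, for all sufficiently small $\la>0$.

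The implication (c)$\implies$(b) is immediate since an orthogonal basis is in particular a basis. For (b)$\implies$(a) I would first extract $(***)$ from the data of (b), which simultaneously establishes the final assertion for an arbitrary basis satisfying (b). Indeed, $(*)$ says $g$ vanishes on the parametrized hypersurface, so Lemma \ref{prepqcc}(a) together with $\nabla\ph(0)=0$ gives $(\nabla g(x))^Tv_i=0$ for $i<n$. Since $v_1,\dots,v_n$ is a basis and $\nabla g(x)\ne0$, the remaining inner product $(\nabla g(x))^Tv_n$ cannot vanish; and the first-order expansion $g(x+\la v_n)=\la(\nabla g(x))^Tv_n+O(\la^2)$, combined with the nonnegativity forced by $(**)$, shows $(\nabla g(x))^Tv_n>0$. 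This is exactly $(***)$. With $(***)$ in hand the hypotheses of Lemma \ref{prepqcc}(b) are satisfied, and since $\hess\ph(0)\succ0$ by assumption (b), that lemma yields strict quasiconcavity of $g$ at $x$, which is (a).

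I expect the only genuine obstacle to be the bookkeeping in the implicit function theorem step of (a)$\implies$(c): one must check that the solution $\ph$ exists on a full neighborhood and is smooth, and that $U$ (and the range of $\la$ in $(**)$) can be shrunk so that all constructed points really lie in the prescribed neighborhood $V$. Everything else reduces to a direct invocation of Lemma \ref{prepqcc} and a one-variable Taylor estimate.
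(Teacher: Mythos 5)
Your proposal is correct and follows essentially the same route as the paper: both reduce everything to Lemma \ref{prepqcc}, derive $(***)$ from part (a) of that lemma together with the basis condition and $(**)$, obtain (b)$\implies$(a) from part (b), treat (c)$\implies$(b) as trivial, and prove (a)$\implies$(c) by picking an orthogonal basis adapted to $\nabla g(x)$ and invoking the implicit function theorem. Your explicit Taylor-expansion justification of the sign $(\nabla g(x))^Tv_n>0$ is merely a spelled-out version of a step the paper leaves implicit.
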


\begin{proof}
Using Lemma \ref{prepqcc}(a), it is easy to show that any $v_1,\dots,v_n$ like in (b) satisfy $(***)$ using that $(\nabla g(x))^Tv_n=0$ would
contradict the hypothesis $\nabla g(x)\ne0$ since $v_1,\dots,v_n$ is a basis. Now Part (b) of the same lemma shows that (b) implies (a).
Since it is trivial that (c) implies (b), it only remains to show that (a) implies (c).

To this end,
let (a) be satisfied. In order to show (c), choose an orthogonal basis $v_1,\dots,v_n$ of $\R^n$ satisfying $(***)$.
The implicit function theorem yields an open neighborhood
$U$ of the origin in $\R^{n-1}$ such that for each $\xi=(\xi_1,\dots,\xi_{n-1})\in U$ there is a
unique $\ph(\xi)\in\R$ satisfying $(*)$, in particular $\ph(0)=0$. Moreover, one can choose $U$ such that the resulting function
$\ph\colon U\to\R$ is smooth. From $(\nabla g(x))^Tv_n>0$, we get $(**)$. From Part (a) of Lemma \ref{prepqcc}, we get
$\nabla\ph(0)=0$. From Part (b) of the same lemma and from (a), we obtain $\hess\ph(0)\succ0$.
\end{proof}

\noindent
Another more algebraic way of understanding strict quasiconcavity is given by
the following easy exercise \cite[Lemma 11(a)]{hn1}.

\begin{lem}\label{mdfhessian}
Let $S\subseteq \R^n$ be a compact set and consider a polynomial $g \in \R[\x]$ that is strictly quasiconcave on $S$. Then one can find $\la>0$ such that
\[\la \nabla g(\nabla g)^T-\hess g\] is positive definite on $S$.
\end{lem}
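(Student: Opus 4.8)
The plan is to reduce the matrix inequality to a scalar statement on the unit sphere and then to obtain a single $\la$ valid on all of $S$ by one joint compactness argument. For fixed $x\in\R^n$, the condition $\la\nabla g(x)(\nabla g(x))^T-\hess g(x)\succ0$ is by definition equivalent to
\[
\la\,(v^T\nabla g(x))^2-v^T(\hess g(x))v>0\qquad\text{for all }v\in\R^n\text{ with }\|v\|=1.
\]
When $v^T\nabla g(x)=0$, strict quasiconcavity of $g$ at $x$ (Definition \ref{dfqc}) already gives $v^T(\hess g(x))v<0$, so the left-hand side is positive no matter how $\la$ is chosen. The only difficulty is therefore to dominate the bounded quadratic form $v\mapsto v^T(\hess g(x))v$ by $\la$ times the square $(v^T\nabla g(x))^2$ precisely on the set where that square is small, and to do so with one $\la$ that works simultaneously for every $x\in S$.

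First I would argue by contradiction. Suppose no $\la>0$ works. Then for each $k\in\N$ there exist $x_k\in S$ and $v_k\in\R^n$ with $\|v_k\|=1$ such that
\[
k\,(v_k^T\nabla g(x_k))^2-v_k^T(\hess g(x_k))v_k\le0.
\]
Since $S$ is compact and the sphere $\{v\in\R^n\mid\|v\|=1\}$ is compact, their product is compact, so after passing to a subsequence I may assume $x_k\to x_\ast\in S$ and $v_k\to v_\ast$ with $\|v_\ast\|=1$; in particular $v_\ast\ne0$. The key step is then to read off the limit: because $g$ is a polynomial, the entries of $\hess g$ are continuous, so $(x,v)\mapsto v^T(\hess g(x))v$ is continuous and hence bounded, say by $C$, on this compact product set. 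The displayed inequality gives $k\,(v_k^T\nabla g(x_k))^2\le v_k^T(\hess g(x_k))v_k\le C$ for all $k$, whence $(v_k^T\nabla g(x_k))^2\to0$ and therefore $v_\ast^T\nabla g(x_\ast)=0$ by continuity.

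Now strict quasiconcavity of $g$ at the point $x_\ast\in S$, applied to the nonzero vector $v_\ast$ lying in the orthogonal complement of $\nabla g(x_\ast)$, yields $v_\ast^T(\hess g(x_\ast))v_\ast<0$. On the other hand the displayed inequality forces $v_k^T(\hess g(x_k))v_k\ge k\,(v_k^T\nabla g(x_k))^2\ge0$, so passing to the limit gives $v_\ast^T(\hess g(x_\ast))v_\ast\ge0$, a contradiction. Hence some $\la>0$ satisfies the required inequality for all $x\in S$, which is exactly the positive definiteness of $\la\nabla g(\nabla g)^T-\hess g$ on $S$. The one place I expect to need care is the passage to the limit: one must use compactness of the sphere to keep $\|v_\ast\|=1$ (so that $v_\ast\ne0$) and compactness of $S$ to keep $x_\ast\in S$, since strict quasiconcavity is only assumed on $S$; everything else is elementary, and the pointwise existence of some $\la_x$ for a single $x$ is just the special case $S=\{x\}$ of the same argument and need not be treated separately.
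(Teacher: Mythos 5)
Your proof is correct. The paper itself gives no argument for this lemma --- it is stated as an ``easy exercise'' with a pointer to \cite[Lemma 11(a)]{hn1} --- so there is no in-paper proof to compare against; your sequential compactness/contradiction argument is the standard way to establish it and fills the gap cleanly. All the delicate points are handled: the negation of ``some $\la>0$ works'' correctly yields, for each $k$, a point $x_k\in S$ and a \emph{unit} vector $v_k$ with $k(v_k^T\nabla g(x_k))^2-v_k^T(\hess g(x_k))v_k\le0$; the uniform bound $C$ on the continuous quadratic form over the compact product $S\times\{v:\|v\|=1\}$ forces $(v_k^T\nabla g(x_k))^2\to0$; and the limit point $(x_\ast,v_\ast)$ stays in $S\times\{v:\|v\|=1\}$, so strict quasiconcavity (Definition \ref{dfqc}) applies and contradicts $v_\ast^T(\hess g(x_\ast))v_\ast\ge0$. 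The only cosmetic remark is that the first reduction (restricting to unit vectors) deserves the one word ``homogeneity''; otherwise nothing is missing.
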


\noindent
We will need the following lemma only in the case where $f$ is linear. In that case, one can use for
its proof a slightly weaker version of the Karush-Kuhn-Tucker theorem \cite[Theorem 5.1]{pla}.

\begin{lem}\label{kkt}
Suppose $\g:=(g_1,\dots,g_m)\in\R[\x]^m$, $S(\g)$ is convex and has nonempty interior.
Suppose $u \in S(\g)$ and let $I:=\{i \in\{1,\dots,m\} \mid g_i(u)=0\}$.
Suppose $f\in\R[\x]$ and $U$ is a neighborhood of $u$ such that
$u$ is a minimizer of $f$ on $S(\g)\cap U$ and
$\hess g_i\preceq0$ on $S(\g)\cap U$ for all $i\in I$.
Then there exist a family $(\la_i)_{i \in I}$ of nonnegative Lagrange multipliers $\la_i\in\R_{\ge0}$ such that
$\nabla f(u)=\sum_{i \in I}\la_i\nabla g_i(u)$. 
\end{lem}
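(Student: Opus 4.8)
The plan is to verify a constraint qualification of Mangasarian--Fromovitz type at $u$ and then invoke the standard Karush--Kuhn--Tucker theorem; when $f$ is linear, as needed in this paper, the cited \cite[Theorem 5.1]{pla} takes over the last step. As a preliminary reduction I would shrink $U$ to a convex open ball $W\subseteq U$ around $u$ on which all $g_i$ with $i\notin I$ remain positive (possible since $g_i(u)>0$ for those $i$). On $W$ the inactive constraints are then redundant, so $S(\g)\cap W=\{x\in W\mid g_i(x)\ge0\text{ for all }i\in I\}$, and it suffices to produce multipliers indexed by $I$. For an index $i\in I$ with $g_i=0$ identically we have $\nabla g_i(u)=0$ and may simply put $\la_i=0$; hence I may assume each active $g_i$ is a nonzero polynomial.

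The heart of the matter is the constraint qualification. Fix an interior point $u_0$ of $S(\g)$ and set $d:=u_0-u$. I claim $\nabla g_i(u)^Td>0$ for every $i\in I$. Consider $h(t):=g_i(u+td)$ for $t\in[0,\delta]$, where $\delta>0$ is chosen so small that the whole segment $u+td$ lies in $S(\g)\cap W$; there $\hess g_i\preceq0$, so $h$ is concave, and $h(0)=g_i(u)=0$. By a standard convexity fact the points $u+td$ with $t\in(0,\delta]$ lie in $\operatorname{int}(S(\g))$. At such an interior point $g_i$ cannot vanish: otherwise $g_i$ would attain its minimum value $0$ at an interior point of the open convex set $\operatorname{int}(S(\g))\cap W$ on which it is concave and nonnegative, which forces $g_i$ to be constant, hence zero, on a ball, and therefore identically zero as a polynomial, contrary to our assumption. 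Thus $h(\delta)>0$, and concavity together with $h(0)=0$ gives $h(t)/t\ge h(\delta)/\delta$ for $t\in(0,\delta]$, whence $\nabla g_i(u)^Td=h'(0)\ge h(\delta)/\delta>0$. Since there are no equality constraints, this single common ascent direction $d$ is exactly the Mangasarian--Fromovitz condition.

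With the qualification in hand, the conclusion is routine: $u$ is a local minimizer of the differentiable $f$ on $S(\g)\cap W$, so the Karush--Kuhn--Tucker theorem produces nonnegative multipliers $(\la_i)_{i\in I}$ with $\nabla f(u)=\sum_{i\in I}\la_i\nabla g_i(u)$ (with $\la_i=0$ for the degenerate indices discarded above); for $f\in\R[\x]_1$ one instead applies \cite[Theorem 5.1]{pla} directly once the qualification is checked.

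I expect the main obstacle to be precisely the middle step, and within it the point that an active defining polynomial which is concave and nonnegative near $u$ cannot vanish at a neighbouring interior point of $S(\g)$ unless it is the zero polynomial; this is what upgrades the directional derivative from $\ge0$ to $>0$ and supplies the constraint qualification. The reduction to active constraints and the final application of the multiplier theorem are then standard.
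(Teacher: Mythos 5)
Your proposal is correct and follows essentially the same route as the paper: both arguments reduce the claim to verifying the Mangasarian--Fromovitz constraint qualification for the active constraints and then invoke the Karush--Kuhn--Tucker theorem, producing the required direction $v$ by aiming from $u$ at a point of the interior of $S(\g)$ and using $\hess g_i\preceq0$ to upgrade a positive difference quotient of $t\mapsto g_i(u+tv)$ to the strict inequality $(\nabla g_i(u))^Tv>0$. The only differences are in detail: the paper picks the target point $x$ directly off the zero sets of the nonzero active $g_i$ (possible since a nonzero polynomial cannot vanish on all of the interior of $S(\g)$) and uses concavity along the whole segment from $u$ to $x$, whereas you derive positivity at nearby interior points from concavity and the chord-slope inequality on a short subsegment --- which has the minor merit of invoking the Hessian hypothesis only inside the neighbourhood $U$ where it is actually assumed.
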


\begin{proof}
By the Karush-Kuhn-Tucker theorem \cite[Theorem 2.2.5]{fh}, it suffices to show that the $g_i$ ($i \in I$)
satisfy the Mangasarian-Fromowitz constraint qualification, i.e., there is some $v\in\R^n$ such
that $(\nabla g_i(u))^Tv>0$ for all $i\in I$ \cite[Chapter 2.2.5]{fh}. By discarding those
$g_i$ that are the zero polynomial, we may assume $g_i\ne0$ for all $i\in I$.
Since $S(\g)$ has
nonempty interior, there is then some $x\in S(\g)$ such that $g_i(x)>0$ for all $i\in I$.
Set $v:=x-u$ and consider for fixed $i\in I$
the function $h\colon\R\to\R,\ t\mapsto g_i(u+tv)$. We have $0=h(0)$ and $h(1)=g_i(x)>0$. Therefore
there is $t\in[0,1]$ such that $h'(t)>0$. Because of $h''(t)=v^T(\hess g_i(u+tv))v\le0$ for all $t\in[0,1]$,
this implies $(\nabla g_i(u))^Tv=h'(0)>0$ as desired.
\end{proof}


\section{The main result}

\noindent
In this section, we will prove our main result about the exactness of the Lasserre relaxation. The first step is to get an alternate description of the compact
basic closed semialgebraic set
$S(\g)$ with nonempty interior. Both descriptions, the original one $\g$ and the alternate one will be used in the proof of Theorem \ref{linearstability}. 
The new description will arise by replacing polynomials $g_i$ that are strictly quasiconcave on $S(\g) \cap Z(g_i)$ by
polynomials of the form $h_i:=g_ih(g_i)$ with a univariate polynomial $h\in\R[T]$ such that $h\ge1$ on $\R$. 
It will be of outmost importance that $h_i\in M(\g)$ which follows from the fact that $h-1$ and therefore $h$ is an sos-polynomial by Lemma \ref{sos} above.
Roughly speaking, the basic idea is that $h_i(x)$ will be, up to positive factor, approximately $1-e^{-cg_i(x)}$ for a big constant $c$
when $x$ lies in $S(\g)$ or $x$ lies sufficiently close to $S(\g)$. The effect of this is that $h_i$ will be a polynomial (unfortunately of large degree) that is very close to
being a positive constant on the ``safe part'' of $S(\g)$ consisting of the points in $S(\g)$ that are in ``safe distance'' to the boundary of $S(\g)$.
On the ``safe part'' of $S(\g)$ one can hope (and it will turn out from our actual choice of $h$) that the Hessian of the $h_i$ does not vary too quickly.
This will be crucial in the proof of Lemma \ref{pain} (the interval $J_3$ appearing there corresponds to this ``safe part'').

\bigskip\noindent
In the proof of Lemma \ref{guess} below, the auxiliary polynomial $h$ will be chosen as
$h:=f_{c,d}\in\Q[T]$ for a big real constant $c$ and a large nonnegative \emph{even} integer $d$
where $f_{c,d}$ is defined in Notation \ref{fcd} below. In \cite[Lemma 13]{hn1}, Helton and Nie use exactly the same polynomial $f_{c,d}$ except
that they do not care about the parity of the degree $d$. Lemma \ref{positivity}
below is an important observation that was probably not known to Helton and Nie. If Helton and Nie had exploited this, they could have
sharpened some of their results in \cite{hn1}. However, they would not have come close to our main result Theorem \ref{mainthm} which ultimately relies on
our new refined and subtle analysis in the proofs of Lemma \ref{pain} and Theorem \ref{linearstability} that focuses on \emph{integrals of} the Hessian of the
$h_i$ instead of the Hessians themselves.

\begin{notation}\label{fcd}
For $c>0$ and $d\in\N_0$, we denote by
\[e_{c,d}:=\sum_{k=0}^d\frac{c^k}{k!}T^k\in\Q[T]\]
the $d$-th Taylor polynomial of the function
\[\R\to\R,\ t\mapsto\exp(ct)\] at the origin and we set
\[f_{c,d}:=\frac{1-e_{c,d+1}(-T)}{cT}=\sum_{k=0}^d\frac{c^{k}}{(k+1)!}(-T)^k\in\Q[T].\]
For any $p\in\R[T]$, we denote by $p'$ its (formal) derivative (with respect to $T$) and by $p''=(p')'$ its second derivative.
\end{notation}

\begin{pro}\label{calculus}
For $c>0$, we have
\begin{align*}
\tag{a}e_{c,d}'&=ce_{c,d-1}\qquad\text{for $d\in\N$,}\\
\tag{b}f_{c,d}'&=\frac{e_{c,d}(-T)-f_{c,d}}T\qquad\text{for $d\in\N_0$ and}\\
\tag{c}f_{c,d}''&=\frac{-e_{c,d}'(-T)-2f_{c,d}'}T\qquad\text{for $d\in\N$.}
\end{align*}
\end{pro}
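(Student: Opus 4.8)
The plan is to prove the three identities in the order (a), (b), (c), using each to obtain the next, and to treat all of them as identities in the polynomial ring $\R[T]$. The one point requiring care is that formulas (b) and (c) involve a division by $T$. I would handle this by first establishing the corresponding identities \emph{multiplied through by} $T$, which are genuine polynomial identities with no division, and then observing that in each case the numerator vanishes at $T=0$, so that $T$ divides it and the division is legitimate in $\R[T]$. This bookkeeping is the only thing to watch; there is no real obstacle, since the whole proposition is an exercise in the product and chain rules.

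For (a) I would differentiate the finite sum term by term: from $e_{c,d}=\sum_{k=0}^d\frac{c^k}{k!}T^k$ one gets $e_{c,d}'=\sum_{k=1}^d\frac{c^k}{(k-1)!}T^{k-1}$, and reindexing with $j=k-1$ turns this into $c\sum_{j=0}^{d-1}\frac{c^j}{j!}T^j=ce_{c,d-1}$. This is a one-line index shift. For (b) I would start from the closed form $cTf_{c,d}=1-e_{c,d+1}(-T)$ that is built into Notation \ref{fcd}. Differentiating both sides, using the product rule on the left and the chain rule together with (a) (namely $e_{c,d+1}'=ce_{c,d}$) on the right, yields $cf_{c,d}+cTf_{c,d}'=ce_{c,d}(-T)$. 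Cancelling the factor $c$ gives the polynomial identity $Tf_{c,d}'=e_{c,d}(-T)-f_{c,d}$. Since $e_{c,d}(0)=1=f_{c,d}(0)$, the right-hand side vanishes at $T=0$, so $T$ divides it and dividing produces (b). As a sanity check one can also read (b) off directly from the explicit coefficients, using $\frac1{k!}-\frac1{(k+1)!}=\frac{k}{(k+1)!}$.

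For (c) I would differentiate the polynomial identity $Tf_{c,d}'=e_{c,d}(-T)-f_{c,d}$ obtained along the way in the proof of (b). The product rule and chain rule give $f_{c,d}'+Tf_{c,d}''=-e_{c,d}'(-T)-f_{c,d}'$, hence $Tf_{c,d}''=-e_{c,d}'(-T)-2f_{c,d}'$. Evaluating the right-hand side at $T=0$ and using $e_{c,d}'(0)=c$ (from (a), valid for $d\in\N$) together with $f_{c,d}'(0)=-\frac c2$ (the constant term of $f_{c,d}'$ is the linear coefficient of $f_{c,d}$) shows it vanishes at $0$, so once more $T$ divides it and we recover (c). This is why the hypothesis $d\in\N$ appears in (c): it guarantees $e_{c,d}'$ is genuinely $ce_{c,d-1}$ and that the linear term of $f_{c,d}$ is present, making the division by $T$ meaningful.
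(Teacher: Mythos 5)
Your proof is correct and takes essentially the same route as the paper, whose entire argument is the one-line instruction to apply the product, chain and quotient rules. Your extra care in clearing the denominator $T$ first and verifying that $T$ divides the resulting numerators (via $e_{c,d}(0)=f_{c,d}(0)=1$ and $e_{c,d}'(0)=c=-2f_{c,d}'(0)$) is a welcome refinement of a point the paper leaves implicit, but it is not a different method.
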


\begin{proof}
Use the chain rule, the product rule and the quotient rule for derivation.
\end{proof}

\noindent
The following lemma has been given an easy short proof by Speyer \cite{spe}, which we reproduce here for convenience of the reader.

\begin{lem}[Speyer]\label{speyer}
For $c\in\R_{>0}$ and $d\in\N_0$, we have:
\begin{enumerate}[(a)]
\item If $d$ is even, then $e_{c,d}(t)>0$ for all $t\in\R$.
\item If $d$ is odd, then $e_{c,d}$ is strictly increasing on $\R$.
\end{enumerate}
\end{lem}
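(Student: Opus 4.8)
The plan is to prove both statements simultaneously by induction on $d$, exploiting the derivative recurrence $e_{c,d}'=c\,e_{c,d-1}$ from Proposition \ref{calculus}(a), which couples the even-degree positivity claim (a) to the odd-degree monotonicity claim (b). The base cases are immediate: $e_{c,0}=1>0$ settles (a) for $d=0$, while $e_{c,1}'=c\,e_{c,0}=c>0$ settles (b) for $d=1$.

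For the inductive step with $d$ odd, I would use that $d-1$ is even, so by the induction hypothesis (a) the polynomial $e_{c,d-1}$ is positive everywhere; hence $e_{c,d}'=c\,e_{c,d-1}>0$ on all of $\R$, and $e_{c,d}$ is therefore strictly increasing, giving (b). This direction is routine once the positivity of the predecessor is in hand.

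The even case is the heart of the argument and the main obstacle. For $d$ even, we have $e_{c,d}'=c\,e_{c,d-1}$ where $d-1$ is \emph{odd}, so by the induction hypothesis (b) the derivative is $c$ times a strictly increasing polynomial of odd degree. Such a polynomial takes every real value exactly once, so $e_{c,d-1}$ has a unique real zero $t_0$; consequently $e_{c,d}'$ is negative on $(-\infty,t_0)$ and positive on $(t_0,\infty)$, which makes $t_0$ the global minimizer of $e_{c,d}$. It then suffices to show $e_{c,d}(t_0)>0$. The key simplification I would use is that, since $e_{c,d-1}(t_0)=0$ by the choice of $t_0$,
\[
e_{c,d}(t_0)=e_{c,d-1}(t_0)+\frac{c^d}{d!}t_0^d=\frac{c^d}{d!}t_0^d.
\]
The remaining subtlety, and what I expect to be the genuinely delicate point, is strict positivity of this value: because $e_{c,d-1}(0)=1\ne0$, the zero $t_0$ cannot be $0$, so $t_0^d>0$ as $d$ is even, whence $e_{c,d}(t_0)=\frac{c^d}{d!}t_0^d>0$. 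Since $t_0$ is the global minimum, $e_{c,d}>0$ everywhere, which establishes (a) and closes the induction.
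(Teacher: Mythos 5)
Your proof is correct and follows essentially the same route as the paper: induction on $d$ using $e_{c,d}'=ce_{c,d-1}$, with the even case reduced to evaluating $e_{c,d}$ at the unique root $t_0$ of $e_{c,d-1}$ and noting $e_{c,d}(t_0)=\frac{(ct_0)^d}{d!}>0$. You even make explicit the small point ($t_0\ne0$ because $e_{c,d-1}(0)=1$) that the paper's proof leaves implicit.
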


\begin{proof}
We fix $c\in\R_{>0}$ and proceed by induction on $d$. The case $d=0$ is trivial since $e_{c,0}=1>0$. Suppose the lemma is already proven
for $d-1$ instead of $d$ where $d\in\N$ is fixed.
First consider the case where $d$ is even. Then by induction hypothesis the odd degree polynomial $e_{c,d-1}$
must have exactly one real root $t_0$. By Lemma \ref{calculus}(a) the even degree polynomial $e_{c,d}$ takes therefore
its (unique) minimum in $t_0$. To prove the statement, it suffices to observe that
\[e_{c,d}(t_0)=\frac{(ct_0)^d}{d!}+e_{c,d-1}(t_0)=\frac{(ct_0)^d}{d!}+0>0.\]
In the case where $d$ is odd, the statement follows immediately from the induction hypothesis and Lemma \ref{calculus}(a).
\end{proof}

\begin{lem}\label{positivity}
Let $c\in\R_{>0}$ and suppose $d\in\N_0$ is even. Then $f_{c,d}(t)>0$ for all $t\in\R$.
\end{lem}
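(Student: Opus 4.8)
The plan is to reduce the positivity of $f_{c,d}$ to the monotonicity statement for odd-degree Taylor polynomials supplied by Speyer's Lemma \ref{speyer}(b). The decisive observation is that the numerator of the quotient defining $f_{c,d}$ involves $e_{c,d+1}$, whose degree $d+1$ is \emph{odd} precisely when $d$ is even, which is exactly the case at hand. So I would first invoke Lemma \ref{speyer}(b) to conclude that $e_{c,d+1}$ is strictly increasing on $\R$.

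Next, I would use the defining formula $f_{c,d}=\frac{1-e_{c,d+1}(-T)}{cT}$ from Notation \ref{fcd}, which gives, for $t\ne0$,
\[
f_{c,d}(t)=\frac{1-e_{c,d+1}(-t)}{ct},
\]
together with the fact that $e_{c,d+1}(0)=1$ (its constant term), so that the numerator equals $e_{c,d+1}(0)-e_{c,d+1}(-t)$. At $t=0$ the value $f_{c,d}(0)=1>0$ can be read off directly from the polynomial form $\sum_{k=0}^d\frac{c^k}{(k+1)!}(-T)^k$, namely from the $k=0$ term.

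The heart of the argument is then a short case distinction on the sign of $t$, using $c>0$. For $t>0$ we have $-t<0$, so strict monotonicity gives $e_{c,d+1}(-t)<e_{c,d+1}(0)=1$, making the numerator positive while the denominator $ct$ is positive; for $t<0$ we have $-t>0$, so $e_{c,d+1}(-t)>1$, making both numerator and denominator negative. In either case the quotient is strictly positive, and combined with $f_{c,d}(0)=1>0$ this establishes the claim.

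There is essentially no obstacle here: the only things to get right are the bookkeeping of signs and matching the parity of $d+1$ with the correct branch of Speyer's lemma. The single point worth double-checking is the removable singularity at $t=0$, which is why I would read the value $f_{c,d}(0)$ off the polynomial expression rather than the quotient.
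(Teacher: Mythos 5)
Your proof is correct. It rests on exactly the same key ingredient as the paper's proof, namely Lemma \ref{speyer}(b) applied to $e_{c,d+1}$, whose degree $d+1$ is odd because $d$ is even, together with the observation that $e_{c,d+1}(0)=1$ and that $f_{c,d}(0)=1$ can be read off the polynomial expansion. The only difference is in how positivity is then extracted: the paper notes that the leading coefficient $\frac{c^d(-1)^d}{(d+1)!}$ of the even-degree polynomial $f_{c,d}$ is positive and that strict monotonicity of $e_{c,d+1}$ forbids any nonzero real root (and the origin is checked separately), so $f_{c,d}$ can never change sign; you instead determine the sign of $f_{c,d}(t)=\frac{1-e_{c,d+1}(-t)}{ct}$ directly by comparing the signs of numerator and denominator in the two cases $t>0$ and $t<0$. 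Your variant is marginally more self-contained, since it dispenses with the leading-coefficient computation and the implicit intermediate-value argument; the paper's variant isolates the root-counting statement, which is the cleaner thing to quote if one only cares about nonvanishing. Both are complete and correct, and you were right to handle the removable singularity at $t=0$ via the polynomial form rather than the quotient.
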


\begin{proof}
The leading coefficient of $f_{c,d}$ is $\frac{c^d(-1)^d}{(d+1)!}>0$. Therefore it suffices to show that $f_{c,d}$ has no real roots.
One easily checks that $f_{c,d}$ has no root at the origin. Assume we have a root $t\in\R$ different from the origin. Then
$e_{c,d+1}(-t)=1$. Observing that $e_{c,d+1}(0)=1$, it follows from Lemma \ref{speyer}(b) that $t=0$, a contradiction.
\end{proof}

\bigskip\noindent
The following lemma is an improved version of \cite[Lemma 13]{hn1}. Most importantly, we manage to get that
$h-1$ (defined in this lemma) is an sos polynomial (and in particular $h$ is positive on $\R$) instead of just positivity of
$h$ on the interval $[0,R]$.
This will come out of Lemmata \ref{positivity} and \ref{sos} together with the approach we take in the proof
that uses simply Taylor approximations of the exponential function instead of the nonconstructive approximation theory used in
\cite{hn1}. The second crucial improvement is the new property (c). A surprising improvement coming out of Lemma
\ref{speyer} is that we get in Condition (a) positivity on $\R$ instead of just the positivity on $[0,R]$ that Helton and Nie get.
At the moment however, we do not have any application for this. Finally, an insignificant improvement again
not used by us is the validity of Condition (b) on the interval $[-R,R]$ instead of the interval $[0,R]$ used by Helton and Nie.

\begin{lem}\label{guess}
Let $H,\de,\ep,R\in\R$ such that $H>0$ and $0<\de<\ep<R$.
Then there exists a univariate polynomial $h\in\R[T]$ such that
\[\text{$h-1$ is an sos polynomial}\]
satisfying the following conditions:
\begin{gather*}
\tag{a}h(t)+th'(t)>0\text{ for all }t\in\R\\
\tag{b}2h'(t) + th''(t)<-H(h(t) + th'(t))\text{ for all }t\in[-R,R]\\
\tag{c}H\max\left\{ h(t) + th'(t) \mid t \in [\ep,R] \right\}<\min\left\{ h(t) + th'(t) \mid t \in [-R,\de]\right\}
\end{gather*}
\end{lem}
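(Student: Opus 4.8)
The plan is to work with the single polynomial $g:=Th$ rather than with $h$ directly. Writing $g'=h+Th'$ and $g''=2h'+Th''$, conditions (a), (b), (c) become, respectively, $g'>0$ on $\R$, $g''<-Hg'$ on $[-R,R]$, and $H\max_{[\ep,R]}g'<\min_{[-R,\de]}g'$. For the candidate $h=f_{c,d}$ one has $g=Tf_{c,d}=\frac1c(1-e_{c,d+1}(-T))$, so differentiating and using Proposition \ref{calculus}(a) twice yields the pleasantly simple formulas $g'=e_{c,d}(-T)$ and $g''=-c\,e_{c,d-1}(-T)$. A crucial preliminary remark is that all three conditions are invariant under replacing $h$ by $\la h$ for any real $\la>0$, since each side of (a), (b), (c) simply scales by $\la$. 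This decouples the analytic conditions from the normalisation needed for the sos condition, and I would exploit it at the very end.

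With $g'=e_{c,d}(-T)$, condition (a) reads $e_{c,d}(-t)>0$ for all $t\in\R$, which holds for every even $d$ by Lemma \ref{speyer}(a); so (a) costs nothing once $d$ is even. Substituting $s=-t$, condition (b) becomes $c\,e_{c,d-1}(s)>H\,e_{c,d}(s)$ on $[-R,R]$, while condition (c) becomes $H\max_{s\in[-R,-\ep]}e_{c,d}(s)<\min_{s\in[-\de,R]}e_{c,d}(s)$.

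The idea now is that $e_{c,d}$ is the degree-$d$ Taylor polynomial of $s\mapsto e^{cs}$, so for fixed $c$ it converges to $e^{cs}$ uniformly on $[-R,R]$ as $d\to\infty$ (the remainder is dominated by $\sum_{k>d}(cR)^k/k!\to0$). I would therefore first fix $c$ large enough that the two \emph{limiting} inequalities hold strictly, namely $c>H$ (so that $(c-H)e^{cs}>0$) and $e^{c(\ep-\de)}>H$ (so that $He^{-c\ep}<e^{-c\de}$); both are possible because $\ep>\de>0$. Then, with this $c$ fixed, I would choose an even $d$ large enough that the uniform approximation pushes the strict limiting inequalities down to the polynomials: for (b), $c\,e_{c,d-1}(s)-H\,e_{c,d}(s)\to(c-H)e^{cs}\ge(c-H)e^{-cR}>0$ uniformly, so it stays positive for $d$ large; for (c), $\max_{[-R,-\ep]}e_{c,d}\to e^{-c\ep}$ and $\min_{[-\de,R]}e_{c,d}\to e^{-c\de}$, uniform convergence passing to the max and min over compacta, so the desired strict inequality survives for $d$ large. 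This is the \textbf{main obstacle}: it is purely quantitative, and the key points are that $c$ must be chosen \emph{before} $d$ (the quality of the Taylor approximation on $[-R,R]$ depends on $c$) and that one must justify interchanging the limit with the max and min.

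It remains to arrange the sos condition, and this is where scaling invariance pays off. For even $d$ the polynomial $f_{c,d}$ is everywhere positive by Lemma \ref{positivity}, has even degree with positive leading coefficient $c^d/(d+1)!$, hence tends to $+\infty$ at $\pm\infty$ and attains a positive minimum $m>0$. Setting $h:=\frac1m f_{c,d}$ leaves (a), (b), (c) intact by the scaling remark, while $h\ge1$ on $\R$, so that $h-1\ge0$ on $\R$ and is therefore a sum of squares by Proposition \ref{sos}. This produces the required $h$.
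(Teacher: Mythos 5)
Your proposal is correct and follows essentially the same route as the paper's own proof: the identities $h+Th'=e_{c,d}(-T)$ and $2h'+Th''=-ce_{c,d-1}(-T)$ for $h=f_{c,d}$, Speyer's lemma for (a) with $d$ even, fixing $c>\max\{H,\frac{\log H}{\ep-\de}\}$ first and then taking $d$ even and large via uniform Taylor approximation on $[-R,R]$, and finally rescaling by the positive minimum of $f_{c,d}$ (Lemma \ref{positivity} plus Proposition \ref{sos}) to make $h-1$ sos. The only cosmetic difference is that you derive the key identities by differentiating $g=Th=\frac1c(1-e_{c,d+1}(-T))$ rather than invoking Proposition \ref{calculus}(b),(c) directly.
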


\begin{proof}
By a scaling argument, we can relax the condition that $h-1$ is sos to
the condition that $h-\gamma$ is sos for some $\gamma\in\R_{>0}$.
By the Lemmata \ref{positivity} and \ref{sos}, it suffices to find
$c\in\R_{>0}$ and $d\in\N_0$ even such that (a)--(c) are satisfied for $h:=f_{c,d}\in\Q[T]$. Noting that
\[f_{c,d}+Tf_{c,d}'=e_{c,d}(-T)\qquad\text{and}\qquad 2f_{c,d}'+Tf_{c,d}''=-e_{c,d}'(-T)=-ce_{c,d-1}(-T)\]
by Proposition \ref{calculus}, this means that we are trying to find $c\in\R_{>0}$ and $d\in\N_0$ even with
\begin{gather*}
\tag{a'}e_{c,d}(-t)>0\text{ for all }t\in\R\\
\tag{b'}-ce_{c,d-1}(-t)<-He_{c,d}(-t)\text{ for all }t\in[-R,R]\\
\tag{c'}H\max\left\{e_{c,d}(-t)\mid t\in[\ep,R] \right\}<\min\left\{e_{c,d}(-t)\mid t \in[-R,\de]\right\}.
\end{gather*}
Condition (a') is always satisfied by Lemma \ref{speyer}(a) if $d$ is even.
Since the functions induced by the polynomials $e_{c,d}$ on the interval
$[-R,R]$ converge uniformly to the function $[-R,R]\to\R,\ t\mapsto\exp(ct)$ as $d\in\N$ tends to infinity, it suffices to find $c>0$ satisfying
\begin{gather*}
\tag{b''}-c\exp(-ct)<-H\exp(-ct)\text{ for all }t\in[-R,R]\\
\tag{c''}H\max\left\{\exp(-ct) \mid t \in [\ep,R] \right\}<\min\left\{ \exp(-ct) \mid t \in [-R,\de]\right\}.
\end{gather*}
These conditions can be rewritten as
\begin{gather*}
\tag{b''}-c<-H\\
\tag{c''}H\exp(-c\ep)<\exp(-c\de).
\end{gather*}
Thus it suffices to choose $c>\max\left\{H,\frac{\log H}{\ep-\de}\right\}$ and $d \in \N_0$ even and sufficiently large.
\end{proof}

\noindent
The previous result is now used to prove the following key lemma.
This key lemma is our ``luxury version'' of \cite[Proposition 10]{hn1} in the work of Helton and Nie.
It will be used in this article only with $C:=S(\g)$ (when $S(\g)$ is compact) but for potential future applications
we formulate it in greater generality. It has several advantages over \cite[Proposition 10]{hn1}. The most
important one is that we only require the $g_i$ to be strictly quasiconcave
on a set that will be very slim in general whereas Helton and Nie assume
them to be strictly quasiconcave on the
whole of $S(\g)$. Another important advantage is that the new polynomials $h_i$ lie in $M(\g)$. The only price that we have
to pay is that not the Hessian itself but only an integrated version of it satisfies the negative definiteness
condition. This will however be enough for the proof of Theorem \ref{linearstability} and the Main Theorem
\ref{mainthm}.

\begin{lem} \label{pain}
Let $\g:=(g_1,\dots,g_m)\in\R[\x]^m$ and let $C$ be a compact subset of $S(\g)$ such that
$g_i$ is strictly \emph{quasi-}concave on $C\cap Z(g_i)$ for each $i\in\{1,\dots,m\}$.
Then there exists a polynomial $h\in\R[T]$ with $h-1$ an sos polynomial such that $h_i:=g_ih(g_i)$ satisfies
\[
\int_0^1(\hess h_i)(u+s(x-u)) \ ds\prec0
\]
for all $i\in \{1,\dots,m\}$, $u\in Z(g_i)$ and $x\in\R^n$ with $\{u+s(x-u)\mid0\le s\le1\}\subseteq C$.
\end{lem}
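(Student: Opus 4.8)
The plan is to reduce everything to the univariate polynomial $h$ supplied by Lemma \ref{guess} via a direct computation of $\hess h_i$. Writing $g:=g_i$ and setting $\ph(t):=h(t)+th'(t)$ and $\chi(t):=2h'(t)+th''(t)=\ph'(t)$, the product and chain rules give $\nabla h_i=\ph(g)\nabla g$ and hence
\[\hess h_i=\chi(g)\,\nabla g\,(\nabla g)^T+\ph(g)\,\hess g.\]
Conditions (a), (b), (c) of Lemma \ref{guess} are tailored exactly to these two functions: (a) says $\ph>0$, (b) says $\chi<-H\ph$ on $[-R,R]$, and (c) compares the minimum of $\ph$ near $0$ with its maximum on $[\ep,R]$. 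The strategy is to choose the constants $H,\de,\ep,R$ feeding into Lemma \ref{guess} so that, along any admissible segment, the negative mass the integrand accumulates near $u$ (where $g$ is small and quasiconcavity bites) dominates whatever positive mass it can pick up in the interior (where $\ph(g)$ is forced to be tiny).

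First I would fix the constants. Applying Lemma \ref{mdfhessian} to each $g_i$ on the compact set $C\cap Z(g_i)$ and taking the largest multiplier yields a single $\la>0$ with $\la\nabla g_i(\nabla g_i)^T-\hess g_i\succ0$ on $C\cap Z(g_i)$ for every $i$. By continuity and compactness of $C$ this positivity survives, with a uniform gap, on a full sublevel neighbourhood: there are $\ep,\mu>0$ with $\hess g_i-\la\nabla g_i(\nabla g_i)^T\preceq-\mu I_n$ on $\{y\in C\mid g_i(y)\le\ep\}$ for all $i$. I then set $A:=\max_{i,y\in C}\|\nabla g_i(y)\|$, $B:=\max_{i,y\in C}\|\hess g_i(y)\|$, $D:=\operatorname{diam}C$, pick $R$ above $\ep$ and above $\max_{i,y\in C}g_i(y)$, choose $\de:=\ep/2$, put $\ell:=\min\{1,\,\de/(AD)\}$, and finally take $H\ge\max\{\la,\,B/(\mu\ell)\}$ before invoking Lemma \ref{guess} to produce $h$ (so that $h-1$ is sos, whence $h_i\in M(\g)$).

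Now fix $i$, $u\in Z(g_i)$ and $x$ with the segment $\gamma(s):=u+s(x-u)$ lying in $C$, and set $p(s):=g(\gamma(s))$, so $p(0)=0$ and $0\le p\le R$ throughout. Splitting off a negative-semidefinite part by means of (b),
\[\int_0^1\hess h_i(\gamma(s))\,ds=\int_0^1\ph(p)\bigl(\hess g-H\nabla g(\nabla g)^T\bigr)ds+\int_0^1\bigl(\chi(p)+H\ph(p)\bigr)\nabla g(\nabla g)^T\,ds,\]
where the second integral is $\preceq0$ because $\chi+H\ph<0$ on $[-R,R]\supseteq p([0,1])$. For the first integral I would partition $[0,1]$ by the value of $p$: on $\{p\le\ep\}$ the chosen $\ep$ guarantees the quasiconcavity bound, so (using $H\ge\la$) the matrix under the integral is $\preceq-\mu\ph(p)I_n\preceq0$, and on the subset $\{p\le\de\}$ it is even $\preceq-\mu\ph_{\min}I_n$ with $\ph_{\min}:=\min_{[-R,\de]}\ph>0$; on $\{p>\ep\}$ I only use the crude bound $\hess g-H\nabla g(\nabla g)^T\preceq\hess g\preceq BI_n$, giving $\preceq\ph_{\max}BI_n$ with $\ph_{\max}:=\max_{[\ep,R]}\ph$. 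The one genuinely new ingredient is a lower bound on the time spent near the boundary: since $|p'(s)|=|(\nabla g(\gamma(s)))^T(x-u)|\le AD$ and $p(0)=0$, we have $p(s)\le\de$ for all $s\le\de/(AD)$, so $|\{p\le\de\}|\ge\ell$ independently of the segment. Integrating the three pointwise matrix bounds therefore gives
\[\int_0^1\ph(p)\bigl(\hess g-H\nabla g(\nabla g)^T\bigr)ds\preceq\bigl(-\mu\ph_{\min}\ell+B\ph_{\max}\bigr)I_n,\]
and the choice $H\ge B/(\mu\ell)$ together with Condition (c), which reads $\ph_{\max}<\ph_{\min}/H$, forces the scalar in parentheses to be negative. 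Hence the whole integral is $\prec0$, as required.

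The main obstacle, and the place where the argument departs from Helton--Nie, is precisely the interior region $\{p>\ep\}$: there $\hess g$ need not be negative semidefinite and quasiconcavity is unavailable, so the integrand can be positive. The resolution is the interplay between two facts that are both under our control: the weight $\ph(g)$ behaves like $e^{-cg}$ and so is exponentially small in the interior (this is exactly what Condition (c) quantifies), while the escape estimate $|p'|\le AD$ forbids the segment from leaving the boundary layer $\{p\le\de\}$ too quickly and thereby guarantees a fixed minimal sojourn $\ell$ there. Balancing these two effects through the single parameter $H$ is what renders the \emph{integrated} Hessian negative definite even though the Hessian itself is not negative semidefinite along the whole segment.
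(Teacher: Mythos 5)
Your proof is correct and follows essentially the same route as the paper: the same splitting of the segment into a boundary layer $\{p\le\de\}$, a middle zone and an interior $\{p>\ep\}$, with condition (c) of Lemma \ref{guess} balancing the guaranteed definiteness near the boundary against the exponentially damped interior contribution, and $H$ chosen large enough to absorb the constants. The only (harmless) variations are that you obtain the minimal sojourn time $\ell$ in the boundary layer by an explicit Lipschitz bound $|p'|\le AD$ rather than by the paper's compactness argument producing $\si$, and that you write the key matrix estimate as an exact decomposition of $\hess h_i$ with a discarded negative semidefinite term instead of the paper's chain of inequalities $\hess h_i\preceq-\ph(g_i)F_i$.
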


\begin{proof}
By Lemma \ref{mdfhessian} and the compactness of $C\cap Z(g_i)$, we find $\la>0$ such that
\[F_i:=\la(\nabla g_i)(\nabla g_i)^T-\hess g_i\]
satisfies
\[F_i(x)\succ0\]
for all $i \in \{1,\dots,m\}$ and all $x\in C\cap Z(g_i)$.
The polynomial $h$ will come
out of Lemma \ref{guess} applied to certain values of $R$, $H$, $\ep$ and $\de$, which we will now adjust.
First of all, we choose $R>0$ such that
\[g_i(x)\le R\] for all $i\in\{1,\dots,m\}$ and $x\in C$.
To get $\ep$, we observe that the compact set $C$ is contained in the union of the chain consisting of the open sets
\[\bigcap_{i=1}^m\left(\left\{x\in\R^n\mid g_i(x)>\ep\right\}\cup\left\{x\in\R^n\mid F_i(x)\succ0\right\}\right)\qquad(0<\ep<R)\]
and therefore is contained in those of these sets that belong to a sufficiently small $\ep$, i.e.,
there is $\ep$ with $0<\ep<R$ such that
\[\forall x\in C:\forall i\in\{1,\dots,m\}:\left(g_i(x)\le\ep\implies F_i(x)\succ0\right).\]
By compactness, there exists $\xi>0$ such that
\begin{align}\forall x\in C:\forall i\in\{1,\dots,m\}:\left(g_i(x)\le\ep\implies F_i(x)\succ\xi \id_n\right). \label{matin}
\end{align} 
We choose $\de$ with $0<\de<\ep$ arbitrary and $d>0$ such that
\[\|x-y\|\le d\]
for all $x,y\in C$. The compact subset $C\times C$ of $\R^{2n}$ is contained in
the union of the chain consisting of the open sets
\begin{multline*}
\left\{(x,y)\in\R^n\times\R^n\mid\|x-y\|>\si\right\}\cup\\
\bigcap_{i=1}^m\left(\left\{(x,y)\in\R^n\times\R^n\mid g_i(x)\ne0\right\}\cup\left\{(x,y)\in\R^n\times\R^n
\mid g_i(y)<\de\right\}\right)\\
(0<\si\le d),
\end{multline*}
and therefore is contained in those of these sets that belong to a sufficiently small $\si$, i.e.,
there is $\si$ with $0<\si\le d$ such that 
\begin{align} 
\forall x,y\in C:\left(\|x-y\|\le\si\implies\forall i\in\{1,\dots,m\}:(g_i(x)=0\implies g_i(y)<\de\right)). \label{einzi}
\end{align}
Because $C$ is compact, we can choose $\ta>0$ such that
\[\|F_i(x)\|\le\ta\]
for all $x\in C$ and $i\in\{1,\dots,m\}$. Finally, set
\[H:=\max\left\{\frac{d\ta}{\si\xi},\la\right\}.\]
Choose $h\in \R[T]$ such that $h-1$ is an sos polynomial in $\R[T]$ according to Lemma \ref{guess} and the chosen values of $H$, $R$, $\ep$ and $\de$.
Fix $i\in\{1,\dots,m\}$ and set $h_i:=g_ih(g_i)$. Using the product and chain rule, we calculate
\[\nabla h_i=g_ih'(g_i)\nabla g_i+h(g_i)\nabla g_i=(h(g_i)+g_ih'(g_i))\nabla g_i\]
and therefore
\[\hess h_i=(h(g_i)+g_ih'(g_i))\hess g_i+\nabla g_i\nabla(h(g_i)+g_ih'(g_i))^T.\]
Using
\[\nabla(h(g_i)+g_ih'(g_i))=(2h'(g_i)+g_ih''(g_i))\nabla g_i,\]
it follows that
\[\hess h_i=(h(g_i)+g_ih'(g_i))\hess g_i + (2h'(g_i)+g_ih''(g_i))(\nabla g_i)(\nabla g_i)^T.\]
One now recognizes that conditions (a) and (b) from Lemma \ref{guess} guarantee that
\begin{align*}
\hess h_i(x)&\preceq\left((h(g_i)+g_ih'(g_i))\left(\hess g_i-H(\nabla g_i)(\nabla g_i)^T\right)\right)(x)\\
&\preceq \left(-(h(g_i)+g_ih'(g_i))F_i\right)(x)
\end{align*}
for all $x\in C$ since $H\ge\la$. Now let $u\in Z(g_i)$ and $x\in\R^n$ with \[\{u+s(x-u)\mid0\le s\le1\}\subseteq C.\]
It suffices to show
\[
\int_0^1\left((h(g_i)+g_ih'(g_i))F_i\right)(u+s(x-u))\ ds \succ0.
\]
To this end, we split up the unit interval $[0,1]$ into three disjoint parts
\begin{align*}
J_1&:=\{s\in[0,1]\mid g_i(u+s(x-u))<\de\},\\
J_2&:=\{s\in[0,1]\mid \de\le g_i(u+s(x-u))\le\ep\}\text{ and}\\
J_3&:=\{s\in[0,1]\mid g_i(u+s(x-u))>\ep\}.
\end{align*}
In particular, each $J_k$ is a union of intervals such that $[0,1]=J_1\dotcup J_2\dotcup J_3$. We now
analyze the integral in question on each of these parts separately:
The integral over $J_1$ will contribute a guaranteed amount of positive definiteness, the integral over $J_2$ an unknown amount of
positive semidefiniteness and the integral over $J_3$ will be very small in norm so that it cannot destroy the positive definiteness accumulated
over $J_1$. For further use, we set
\[M:=\max\{h(s)+h'(s)s\mid s\in[\ep,R]\}.\]

\medskip\noindent
\textbf{Analysis on $J_1$.} The subinterval $[0,\frac\si d]$ of $[0,1]$ (note that $\frac\si d\le1$) is contained in $J_1$ since
$\|u-(u+s(x-u))\|=s\|x-u\|\le\frac\si dd=\si$ for $s \in [0,\frac{\sigma}{d}]$ and therefore \[g_i(u+s(x-u))<\de\] for all $s\in[0,\frac\si d]$ by the choice of $\si$ (see
Property \eqref{einzi} above).
By choice of $\xi$, we have that \[F_i(u+s(x-u))\succ\xi\id_n\] for all $s\in J_1$ (in fact also for $s\in J_2$). By Parts (a) and (c) of Lemma \ref{guess},
we have $(h(g_i)+g_ih'(g_i))(u+s(x-u)) > HM$ for all $s\in J_1$. Hence we get with Property \eqref{matin} above that
\[
\int_{J_1}\left((h(g_i)+g_ih'(g_i))F_i\right)(u+s(x-u))\ ds\succ \frac\si dHM\xi\id_n\succeq\frac\si d\frac{d\ta}{\si\xi}M\xi\id_n=\ta M\id_n.
\]

\medskip\noindent
\textbf{Analysis on $J_2$.} We have of course \[F_i(u+s(x-u))\succeq0\] for all $s\in J_2$ (in fact also for $s\in J_1$) and,
by Part (a) of Lemma \ref{guess},
\[(h(g_i)+g_ih'(g_i))(u+s(x-u))\ge0\] for all $s\in[0,1]$.
Hence
\[
\int_{J_2}\left((h(g_i)+g_ih'(g_i))F_i\right)(u+s(x-u))\ ds \succeq0.
\]

\medskip\noindent
\textbf{Analysis on $J_3$.} We have of course $F_i(u+s(x-u))\succeq-\|F_i(u+s(x-u))\|\id_n\succeq-\ta\id_n$ for all $s\in[0,1]$
and therefore
\[
\int_{J_3}\left((h(g_i)+g_ih'(g_i))F_i\right)(u+s(x-u))\ ds\succeq-M\ta\id_n
\]

\medskip\noindent
\textbf{Total analysis.} Finally, we get
\begin{multline*}
\int_0^1\left((h(g_i)+g_ih'(g_i))F_i\right)(u+s(x-u))\ ds\\
\succeq\int_{J_1}\left((h(g_i)+g_ih'(g_i))F_i\right)(u+s(x-u))\ ds\\
+\int_{J_3}\left((h(g_i)+g_ih'(g_i))F_i\right)(u+s(x-u))\ ds\\
\succ\ta M\id_n-M\ta\id_n=0
\end{multline*}
\end{proof}

\begin{thm} \label{linearstability}
Let $\g:=(g_1,\dots,g_m)\in\R[\x]^m$ such that $S(\g)$ is convex with nonempty interior and
$M(\g)$ is Archimedean. Suppose that each $g_i$ is strictly quasiconcave on $S(\g)\cap Z(g_i)$ or
$\g$-sos-concave. Then there is $d\in\N_0$ such that for all $f\in\R[\x]_1$
with $f\ge0$ on $S(\g)$ we have $f\in M_d(\g)$.
\end{thm}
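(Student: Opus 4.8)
The goal is to produce a \emph{single} degree bound $d$ that works for all linear $f\ge0$ on $S(\g)$ at once; via Proposition \ref{duality} this will later give $S_d(\g)=S(\g)$, the exactness asserted in Main Theorem \ref{mainthm}. Since $M(\g)$ is Archimedean, Proposition \ref{chararch}(b) forces $S(\g)$ to be compact, so each such $f$ attains its minimum $c:=\min_{S(\g)}f\ge0$ at some $u\in S(\g)$. As $c=(\sqrt c)^2\in M_0(\g)$, it suffices to handle $\tilde f:=f-c$, which is linear, nonnegative on $S(\g)$, and vanishes at $u$. Put $I:=\{i\mid g_i(u)=0\}$.

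First I would improve the active constraints. Split $\{1,\dots,m\}=A\dotcup B$, where $A$ collects the indices with $g_i$ strictly quasiconcave on $S(\g)\cap Z(g_i)$ and $B$ the $\g$-sos-concave ones. Applying Lemma \ref{pain} with $C:=S(\g)$ to the subtuple $(g_i)_{i\in A}$ (note $S(\g)\subseteq S((g_i)_{i\in A})$) yields one $h$ with $h-1$ sos; set $h_i:=g_ih(g_i)$ for $i\in A$ and $h_i:=g_i$ for $i\in B$, and write $\h:=(h_1,\dots,h_m)$. Because $h$ is sos and positive, each $h_i\in M_{d_0}(\g)$ for a fixed $d_0$ independent of $f$ and $u$, and $S(\h)=S(\g)$ with the same active set $I$ at $u$. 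The modified polynomials have negative semidefinite Hessian near the active part of the boundary: for $i\in B$ this is the sos-concavity, and for $i\in A$ it is exactly the pointwise estimate $\hess h_i\preceq-(h(g_i)+g_ih'(g_i))F_i\prec0$ established where $g_i$ is small in the proof of Lemma \ref{pain}. Hence Lemma \ref{kkt} applies to $\h$ and $\tilde f$, furnishing multipliers $\mu_i\ge0$ ($i\in I$) with $\nabla\tilde f(u)=\sum_{i\in I}\mu_i\nabla h_i(u)$.

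Next set $q:=\tilde f-\sum_{i\in I}\mu_ih_i$, so $q(u)=0$, $\nabla q(u)=0$, and, as $\tilde f$ is linear, $\hess q=-\sum_{i\in I}\mu_i\hess h_i$. The second-order Taylor formula with integral remainder based at $u$ gives
\[q(x)=(x-u)^TP_u(x)(x-u),\qquad P_u(x):=\int_0^1\!\!\int_0^t(\hess q)(u+s(x-u))\,ds\,dt,\]
whence $P_u=\sum_{i\in I}\mu_iK_{i,u}$ with $K_{i,u}:=-\int_0^1\int_0^t(\hess h_i)(u+s(x-u))\,ds\,dt$. For $i\in B$, the matrix polynomial $-\hess g_i$ lies in some fixed $M_{d_2}^{n\times n}(\g)$, so Lemma \ref{bochnercone} gives $K_{i,u}\in M_{d_2}^{n\times n}(\g)$ outright. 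For $i\in A$, substituting $s=tr$ turns the inner integral into $t\int_0^1(\hess h_i)(u+r(x_t-u))\,dr$ with $x_t:=u+t(x-u)$; since $[u,x_t]\subseteq S(\g)$ by convexity and $u\in Z(g_i)$, Lemma \ref{pain} makes this $\prec0$, so $K_{i,u}(x)\succ0$ for every $x\in S(\g)$.

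The crux, and the step I expect to be the main obstacle, is to force these $K_{i,u}$ (for $i\in A$) into \emph{one} fixed $M_d^{n\times n}(\g)$, uniformly in the base point $u$. Here I would invoke the degree-bounded matricial Putinar theorem \ref{putinarmatrixbound}. As $(u,x)$ ranges over the compact set $(Z(g_i)\cap S(\g))\times S(\g)$, the coefficients of $K_{i,u}(x)$ depend continuously on $u$, so in a fixed norm $\|K_{i,u}\|\le C$, while $K_{i,u}\succ0$ everywhere on this compact set gives $K_{i,u}\succeq\tfrac1C I_n$ on $S(\g)$; moreover $\deg_x K_{i,u}$ is bounded because $h$ is fixed. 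Theorem \ref{putinarmatrixbound} then yields \emph{one} degree $d$ with $K_{i,u}\in M_d^{n\times n}(\g)$ for \emph{all} admissible $u$. Consequently $P_u=\sum_{i\in I}\mu_iK_{i,u}\in M_{\max(d,d_2)}^{n\times n}(\g)$, a convex cone stable under the weights $\mu_i\ge0$. Writing $P_u=\sum_{l,j}Q_{lj}^TQ_{lj}\,g_l$ and expanding $q=\sum_{l,j}\|Q_{lj}(x-u)\|^2g_l$ shows, since multiplying by the linear factor $x-u$ raises each inner degree by exactly one and thus matches the bound of $M_{\max(d,d_2)+2}(\g)$, that $q\in M_{\max(d,d_2)+2}(\g)$. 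Finally $f=q+\sum_{i\in I}\mu_ih_i+c$ lies in $M_{d'}(\g)$ with $d':=\max(\max(d,d_2)+2,d_0)$, a bound depending only on $\g$. The genuinely delicate points are the uniformity of the definiteness constant across the $u$-family — which is precisely what the strict negativity in Lemma \ref{pain}, rather than mere semidefiniteness, buys us — and the verification that Lemma \ref{kkt} is legitimately applied to the modified system $\h$.
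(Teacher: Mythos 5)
Your proposal is correct and follows essentially the same route as the paper's own proof: the same split into strictly quasiconcave and $\g$-sos-concave indices, the replacement $h_i:=g_ih(g_i)$ via Lemma \ref{pain}, the KKT step, the double-integral matrices $K_{i,u}$ (the paper's $H_{i,u}$), uniform positive definiteness and boundedness by compactness feeding into Theorem \ref{putinarmatrixbound}, Lemma \ref{bochnercone} for the sos-concave indices, and the final Taylor identity $f-\sum\mu_ih_i=(\x-u)^TP_u(\x-u)$. The only cosmetic difference is that you justify $\hess h_i\prec0$ near the active boundary by citing an estimate internal to the proof of Lemma \ref{pain}, whereas the paper obtains it directly from the lemma's statement by setting $x=u$ and invoking continuity.
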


\begin{proof}
Choose $I$ and $J$ such that $\{1,\dots,m\}=I\dotcup J$,
$g_i$ is strictly quasiconcave on $S(\g)\cap Z(g_i)$ for $i\in I$ and $g_j$ is
$\g$-sos-concave for $j\in J$.
Applying Lemma \ref{pain} with $(g_i)_{i \in I}$ instead of $\g$  and the compact subset $C:=S(\g)$ of $S((g_i)_{i \in I})$, we get for each $i\in I$ a polynomial
\[h_i\in M(\g)\]
satisfying $S(h_i)=S(g_i)$, $Z(h_i)=Z(g_i)$ and
\[
\int_0^1(\hess h_i)(u+s(x-u)) \ ds\prec0
\]
for all $u\in S(\g)\cap Z(g_i)$ and $x\in S(\g)$. Setting here $x=u$, we obtain
in particular
\begin{align} \label{hessq}
\hess h_i\prec0\text{ on }S(\g)\cap Z(g_i)
\end{align}
for each $i\in I$.
Set \[h_j:=g_j\] for all $j\in J$. Then \[S(\g)=S(\h).\]
Choose $d_1\in\N_0$ such that \[h_i\in M_{d_1}(\g)\] for all $i\in I\cup J$.
Define for all $i\in I\cup J$ and $u\in\R^n$ a symmetric matrix polynomial $H_{i,u}\in \R[\x]^{n\times n}$ by
\[
H_{i,u}(x)=-\int_0^1\int_0^t(\hess h_i)(u+s(x-u)) \ ds\ dt
\]
for all $x\in\R^n$.
Applying compactness of $S(\g)\cap Z(g_i)$, $S(\g)$ and the unit sphere in $\R^n$ together
with continuity, we find $\de>0$ such that
\[
-\int_0^1(\hess h_i)(u+s(x-u)) \ ds\succeq2\de \id_n
\]
for all $i \in I, u\in S(\g)\cap Z(g_i)$ and $x\in S(\g)$.
For each $t\in[0,1]$, we apply this to $u+t(x-u)\in S(\g)$ instead of $x$ to get
\[
-\int_0^t(\hess h_i)(u+s(x-u)) \ ds = -t\int_0^1(\hess h_i)(u+st(x-u)) \ ds\succeq2t\de \id_n
\]
for all $i \in I, u\in S(\g)\cap Z(g_i)$ and $x\in S(\g)$.
Thus
\[
H_{i,u}(x)\succeq\int_0^12t\de\id_n\ dt=\de\id_n
\]
for all $i\in I$, $u\in S(\g)\cap Z(g_i)$ and $x\in S(\g)$.
Again using the compactness of $S(\g)\cap Z(g_i)$ and continuity, we find some $E>0$
such that
\[\|H_{i,u}\|\le E\]
for all $i\in I$ and $u\in S(\g)\cap Z(g_i)$.
Theorem \ref{putinarmatrixbound} yields $d_2\in\N$ such that
\[H_{i,u}\in M_{d_2}^{n\times n}(\g)\] for all $i\in I$ and $u\in S(\g)\cap Z(g_i)$.
Lemma \ref{bochnercone} yields
$d_3\in\N$ such that
\[H_{j,u}\in M_{d_3}^{n\times n}(\g)\]
for all $j\in J$ and $u\in\R^n$.
For later use, set
\[d_4:=\max\{d_2,d_3\}+2\quad\text{and}\quad d:=\max\{d_1,d_4\}.\]
Now let $f\in\R[\x]_1$ with $f\ge0$ on $S(\g)$. Since $S(\g)$ is nonempty and compact, we
can define $c$ as the minimum of $f$ on $S(\g)$. Exchanging $f$ by $f-c$, we can suppose
without loss of generality that $c=0$.
Then there is some $u\in S(\g)$ with
\[f(u)=0.\] Consider
\[K:=\{i\in I\cup J\mid g_i(u)=0\}
=\{i\in I\cup J\mid h_i(u)=0\}.\]
Because of $\hess h_i(u)\prec0$ (see Property \eqref{hessq}) and continuity, we get a neighborhood $U$ of $u$ such that
\[\hess h_i\prec0\text{ on }U\]
for all $i\in I\cap K$. Since
each $h_j=g_j$ with $j\in J$ is $\g$-sos-concave, we have on the other hand
\[\hess h_j\preceq0\text{ on }S(\g)\]
for all $j\in J$. Combining both, we have in particular that
\[\hess h_k\preceq0\text{ on }S(\g)\cap U\]
for all $k\in K$. Applying Lemma \ref{kkt}, we get a family $(\la_k)_{k\in K}$ of nonnegative Lagrange multipliers such
that $\nabla f=\sum_{k\in K}\la_k\nabla h_k(u)$ (recall that $f$ is linear) and thus
\[\left(f-\sum_{k\in K}\la_kh_k\right)(u)=0\text{ and }\nabla\left(f-\sum_{k\in K}\la_kh_k\right)(u)=0.\]
Fix now $x\in\R^n$. For the map
\[h\colon\R\to \R,\ s \mapsto\left(f-\sum_{k\in K}\la_kh_k\right)(u+s(x-u)),\]
we have $h(0)=0$, $h'(0)=0$ and
\[h''(s)=-\sum_{k\in K}\la_k(x-u)^T((\hess h_k)(u+s(x-u)))(x-u)\]
for $s\in\R$. Hence
\begin{multline*}
\left(f-\sum_{k\in K}\lambda_kh_k\right)(x)=h(1)
\overset{h(0)=0}=\int_0^1 h'(t) \ dt
\overset{h'(0)=0}=\int_0^1 \int_0^t h''(s) \ ds \ dt\\
=\sum_{k\in K}\lambda_k(x-u)^T H_{k,u} (x-u).
\end{multline*}
Since $x\in\R^n$ was arbitrary, we thus have
\[
f-\sum_{k\in K}\lambda_kh_k
=\sum_{k\in K}\lambda_k(\x-u)^T H_{k,u} (\x-u)\in M_{d_4}(\g)
\]
and thus $f\in M_d(\g)$.
\end{proof}

\noindent
Note that it is essential in the previous theorem to require $f$ to be linear.
It is even not enough to require $f$ to be globally convex of small bounded degree
\cite{kl}.

\begin{main} \label{mainthm}
Let $\g:=(g_1,\dots,g_m)\in\R[\x]^m$ such that $S(\g)$ is convex with nonempty interior and
$M(\g)$ is Archimedean. Suppose that each $g_i$ is strictly quasiconcave on $S(\g)\cap Z(g_i)$ or
$\g$-sos-concave. Then $\g$ has an exact Lasserre relaxation.
\end{main}

\begin{proof}
Directly from \ref{linearstability} by the trivial direction of Proposition \ref{duality}.
\end{proof}

\noindent
In the situation of this theorem,
now drop the convexity assumption and consequently ask whether
the \emph{convex hull} of $S(\g)$ (instead of $S(\g)$ itself) equals $S_d(\g)$ for large $d$.
Helton and Nie proved that in this situation the convex hull of $S(\g)$ is
semidefinitely representable \cite[Theorem 4.4]{hn2}. The question arises if it even equals
$S_d(\g)$ for large $d$. This will be proven in our forthcoming paper \cite{ks}
if \emph{all} $g_i$ are strictly quasiconcave on $S(\g)\cap Z(g_i)$. 
However, Example \ref{twodisks} below shows that in this case, one cannot
allow that some of the $g_i$ are linear (or even sos-concave) instead.
To prove this, we need the following
important criterion from \cite[Proposition 4.1]{gn}.

\begin{thm}[Gouveia and Netzer] \label{gnobstr}
Suppose $\g:=(g_1,\dots,g_m)\in\R[\x]^m$, $L \subseteq \R^n$ is a straight line in $\R^n$,
$S(\g) \cap L$ has nonempty interior in $L$ and $u\in S(\g)$ is an element of the boundary of $\overline{\conv(S(\g))} \cap L$ in $L$. Suppose that for each $i$ with $g_i(u)=0$,
$\nabla g_i(u)$ is orthogonal to $L$. Then $S_d(\g)$ \emph{strictly} contains the closure of
the convex hull of $S(\g)$ for all $d$.
\end{thm}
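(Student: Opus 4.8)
The plan is to exhibit, for each fixed $d$, a single point lying in $S_d(\g)$ but not in $\overline{\conv(S(\g))}$. Write $L=\{u+Tw\mid T\in\R\}$ with a direction vector $w$; since $u$ is a boundary point in $L$ of the nondegenerate segment $\overline{\conv(S(\g))}\cap L$, I may fix the sign of $w$ so that $u+Tw\in\overline{\conv(S(\g))}$ forces $T\le 0$, whence $u+T_0w\notin\overline{\conv(S(\g))}$ for every $T_0>0$. I first reduce to one variable: I associate to $L$ the restricted polynomials $\gamma_i(T):=g_i(u+Tw)\in\R[T]$, set $\gamma:=(\gamma_1,\dots,\gamma_m)$, and consider the pullback $L_\ell(q):=\ell(q(u+Tw))$ sending a linear functional $\ell$ on $\R[T]_d$ to one on $\R[\x]_d$. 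A routine check (respecting the block structure of $M$ and the degree truncations $r_i$) shows that the pullback of a feasible degree-$d$ univariate moment/localizing solution for $\gamma$ is again a feasible degree-$d$ solution of the multivariate Lasserre system, and its first-moment vector is $u+\ell(T)\,w$. Hence it suffices to produce a feasible univariate $\ell$ with $\ell(1)=1$ and $\ell(T)>0$, for then $u+\ell(T)w\in S_d(\g)\setminus\overline{\conv(S(\g))}$.

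The decisive input is the hypothesis $\nabla g_i(u)\perp L$: it says precisely that $\gamma_i'(0)=\langle\nabla g_i(u),w\rangle=0$, so every constraint active at $u$ (that is, with $\gamma_i(0)=g_i(u)=0$) vanishes to order at least $2$ at the origin. I would record the dual consequence of this: the linear polynomial $-T$ is nonnegative on $S(\gamma)\subseteq(-\infty,0]$ but does \emph{not} lie in $M(\gamma)$, hence not in $M_d(\gamma)$ for any $d$. This is the heart of the argument. Assume an identity $-T=\sum_i\sigma_i\gamma_i$ with $\gamma_0:=1$ and each $\sigma_i$ a univariate sum of squares. Evaluating at $0$, nonnegativity of each summand forces $\sigma_0(0)=0$ and $\sigma_i(0)=0$ for every inactive $i$; a sum of squares vanishing at a point has vanishing derivative there, so $\sigma_0'(0)=0$ and $\sigma_i'(0)=0$ for those $i$. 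Differentiating the identity at $0$, the active terms contribute nothing because $\gamma_i(0)=\gamma_i'(0)=0$, the inactive terms contribute $\sigma_i'(0)\gamma_i(0)+\sigma_i(0)\gamma_i'(0)=0$, and the $i=0$ term is $\sigma_0'(0)=0$; thus the right-hand derivative is $0$, contradicting $(-T)'(0)=-1$.

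To convert this non-representability into an actual feasible $\ell$ with $\ell(T)>0$, I would use semidefinite duality for the univariate relaxation. The moment program $\sup\{\ell(T)\mid\ell\text{ feasible},\ \ell(1)=1\}$ is strictly feasible: discarding any trivial constraints $\gamma_i\equiv0$ and taking a measure whose support is a subinterval of the (nonempty) interior of $S(\gamma)$ on which all remaining $\gamma_i$ are strictly positive yields strictly positive-definite moment and localizing matrices, a Slater point. Suppose, for contradiction, that $\ell(T)\le 0$ for every feasible $\ell$. Then the primal value is finite and $\le 0$, so by strong duality (primal Slater) the dual $\inf\{\lambda\mid\lambda-T\in M_d(\gamma)\}$ equals it and is attained at some $\lambda^\ast\le 0$. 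But $\lambda-T\in M_d(\gamma)$ implies $\lambda\ge\max_{S(\gamma)}T=0$, so $\lambda^\ast=0$, giving $-T\in M_d(\gamma)$ and contradicting the previous paragraph. Hence a feasible $\ell$ with $\ell(T)>0$ exists, and its pullback delivers the desired point of $S_d(\g)$.

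I expect the main obstacle to be the duality bookkeeping that links the clean ``$-T\notin M_d(\gamma)$'' computation to a genuine interior point of the relaxation: one must verify Slater for the univariate moment program so that strong duality holds with the \emph{dual} optimum attained by an actual representation $\lambda^\ast-T\in M_d(\gamma)$ (rather than only in its closure), since otherwise the non-membership would not by itself force the relaxation to protrude. A secondary technical point is the reduction itself, namely confirming that the pullback $L_\ell$ indeed satisfies all multivariate moment and localizing conditions with the correct truncation levels $r_i$. The remaining issues are purely geometric and are where the hypotheses enter: the nonempty-interior assumption on $S(\g)\cap L$ guarantees both that $\overline{\conv(S(\g))}\cap L$ is a nondegenerate segment with $u$ a legitimate endpoint (fixing the outward direction $w$ and the normalization $S(\gamma)\subseteq(-\infty,0]$) and that the Slater measure above exists.
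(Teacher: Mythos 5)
The paper offers no proof of this theorem at all: it is imported verbatim from Gouveia and Netzer \cite[Proposition 4.1]{gn} and used as a black box in Example \ref{twodisks}, so there is no internal argument to compare against. Your proposal is, in substance, a correct self-contained reconstruction of the Gouveia--Netzer argument, and its three pillars all hold up. The pullback step works because $\deg\gamma_i\le\deg g_i$, so the univariate truncation levels are at least the multivariate ones $r_i$ and every test polynomial $p^2g_i$ with $p\in\R[\x]_{r_i}$ restricts to an admissible univariate test polynomial (identically vanishing $\gamma_i$ contribute vacuous constraints). The derivative computation is exactly where the orthogonality hypothesis enters: active constraints satisfy $\gamma_i(0)=\gamma_i'(0)=0$, and an sos multiplier forced to vanish at $0$ has vanishing derivative there, so differentiating the putative identity kills every term and contradicts $(-T)'(0)=-1$; this correctly yields $-T\notin M(\gamma)$, not merely $-T\notin M_d(\gamma)$. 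Finally, the Slater point built from a measure on a subinterval of the interior of $S(\gamma)$ avoiding the finitely many zeros of the nonzero $\gamma_i$ gives positive definite moment and localizing matrices, so strong duality with \emph{dual attainment} applies and your contradiction genuinely produces $-T\in M_d(\gamma)$ rather than only membership in the closure of that cone --- you were right to identify this as the delicate point.

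Two minor items remain. First, you establish $S_d(\g)\not\subseteq\overline{\conv(S(\g))}$, which is the real content and all that Example \ref{twodisks} uses, but the phrase ``strictly contains'' also presupposes $\overline{\conv(S(\g))}\subseteq S_d(\g)$; convexity of $S_d(\g)$ gives $\conv(S(\g))\subseteq S_d(\g)$, and passing to the closure costs nothing when $S(\g)$ is compact (then $\conv(S(\g))$ is already closed), but it deserves a sentence in general since $S_d(\g)$ need not be closed. Second, your sign normalization of $w$ uses that $\overline{\conv(S(\g))}\cap L$ is a closed interval with nonempty interior having $u$ as a finite endpoint, so that $S(\gamma)\subseteq(-\infty,0]$ with $0\in S(\gamma)$; this is correct and is precisely where the nonempty-interior hypothesis on $S(\g)\cap L$ is consumed.
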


\begin{ex} \label{twodisks}
Let $n:=2$, write $X,Y$ for $X_1,X_2$ and consider $\g:=(g_1,g_2)$ with
\[g_1:=-(1-X^2-Y^2)(4-(X-4)^2-Y^2)\text{ and }g_2:=1-Y.\]
We see that $S(g_1)$ is the disjoint union of two closed disks of different radii. The affine half plane $S(g_2)$
cuts out a piece from the bigger disk and its boundary line $L:=\left(\begin{smallmatrix} 0 \\ 1 \end{smallmatrix}\right) +\R \left(\begin{smallmatrix} 1 \\ 0 \end{smallmatrix}\right)$
is tangent to the smaller disk.
Since $S(g_1)$ is compact, $M(\g)$ is Archimedean by
Theorem \ref{schmu}(b).
By Proposition \ref{qcc}(b), $g_1$ is strictly quasiconcave on $S(\g)\cap Z(g_1)$. The line $L$ is tangent to the smaller disk in the point $\left(\begin{smallmatrix} 0 \\ 1 \end{smallmatrix}\right)$ and passes through the interior of the larger disk.
By the criterion \ref{gnobstr} of Gouveia and Netzer applied with $u:=\left(\begin{smallmatrix} 0 \\ 1 \end{smallmatrix}\right)$,
$S_d(\g)$ strictly contains the convex hull of $S(\g)$ for all $d$.
By inspection of the proof of Gouveia and Netzer, we see more precisely
that each $S_d(\g)$ contains a left neighbourhood of $u$ inside $L$.
\end{ex}

\section*{Acknowledgments}
\noindent
The authors would like to thank all three anonymous referees for their thorough reading that helped to improve the presentation of the material.

\end{document}